\definecolor{r}{rgb}{0.9,0.3,0.1}
\definecolor{b}{rgb}{0.1,0.3,0.9}
\newtheorem{theo}{Theorem}[section]
\newtheorem{lemm}[theo]{Lemma}
\newtheorem{rem}[theo]{Remark}
\newcommand{\al}{\alpha}
\newcommand{\be}{\beta}
\newcommand{\ga}{\gamma}
\newcommand{\Ga}{\Gamma}
\newcommand{\om}{\omega}
\newcommand{\Om}{\Omega}
\newcommand{\si}{\sigma}
\newcommand{\ep}{\epsilon }
\newcommand{\te}{\theta}
\newcommand{\De}{\Delta}
\newcommand{\de}{\delta}
\newcommand{\pa}{\partial}
\newcommand{\ri}{\rightarrow}
\newcommand{\na}{\nabla}
\newcommand{\Rb}{{\mathbb R}^n }
\newcommand{\mysection}[1]{\section{#1}
\setcounter{equation}{0}}
\begin{document}
\setlength{\baselineskip}{16pt}

\title
{On Initial-Boundary Value Problem of\\ Stochastic Heat Equation
in a Lipschitz Cylinder}
\author{TongKeun Chang$\,^1$, Kijung Lee$\,^2$, Minsuk Yang$\,^{1}$
\\[0.5cm]
\small{$\;^{1}\,$Department of Mathematics, Yonsei University}\\
\small{Seoul, 136-701, Korea}\\
\small{$\;^{2}\,$Department of Mathematics, Ajou University}\\
\small{Suwon, 443-749, Korea}\\}
\date{}



\maketitle

\begin{abstract}
We consider the initial boundary value problem of non-homogeneous
stochastic heat equation. The derivative of the solution with
respect to time receives heavy random perturbation. The space
boundary is Lipschitz and we impose non-zero cylinder condition.
We prove a regularity result after finding suitable spaces for the
solution and the pre-assigned datum  in the problem. The tools
from potential theory, harmonic analysis and probability are used.
Some Lemmas are as important as the main Theorem.

\vspace*{.125in}

\noindent {\it Keywords: } Stochastic heat equation, Lipschitz
cylinder domain, Initial-boundary value problem, Anisotropic Besov
space.

\vspace*{.125in}

\noindent {\it AMS 2000 subject classifications:} primary ; 60H15,
secondary; 35R60
\end{abstract}

\mysection{Introduction}\label{introduction}
\setcounter{equation}{0}

We study the following initial boundary value problem:
\begin{eqnarray}\label{main equation}
\left\{\begin{array}{l} d u(t,x)= (\De u(t,x) + f(t,x))dt + g(t,x)
d w_t, \quad  (t,x)\in (0,T) \times D,\vspace{0.3cm}\\
\;\;u(t,x) = b(t,x),\hspace{4.6cm} (t,x)\in(0,T) \times \pa D,\vspace{0.3cm}\\
\;\;u(0,x) = u_0,\hspace{5.7cm} x\in D,
\end{array}
\right.
\end{eqnarray}
where $D$ is a bounded Lipschitz domain in $\Rb$ and
$\{w_t(\omega):t\ge 0,\omega\in\Omega\}$ is a one-dimensional
Brownian motion with a probability space $\Omega$. Any solution of
(\ref{main equation}) depends not only $(t,x)$, but also $\omega$.
We investigate the regularity of the solution of (\ref{main
equation}) in $(t,x)$ for each $\omega$.

If $g\equiv 0$, the problem is deterministic and the theory has
been well-developed. For instance, \cite{FR} considered the
problem when $D$ is a bounded $C^1$-domain and  \cite{B} and
\cite{B2} studied the problem when $D$ is a  bounded Lipschitz
domain. Later, \cite{JM} developed a theory using anisotropic
Besov spaces. However in our paper, as we let $g\not\equiv 0$, we
deal with a stochastic heat equation. This job is nontrivial.
Viewing the heat equation in (\ref{main equation}) as
$u_t(t,x)=\De u(t,x)+f(t,x)+\dot{w}_t g(t,x)$, we notice that our
equation includes an internal source/sink with the white noise
coefficient. The (probabilistic) variance of the random noise
$\dot{w}_t$, $t\in (0,T)$ is not bounded. Moreover $\dot{w}_{t_1}$
and $\dot{w}_{t_2}$ are independent as long as $t_1\ne t_2$. Thus,
 \emph{we do not expect good regularity in time direction since the
solution keeps receiving the white noises  along the time
variable.} An $L_p$-theory of the Cauchy problem
($D=\mathbb{R}^n$) was established in \cite{K1}  and since then
the initial boundary value problem with zero boundary condition is
studied by many authors (see, for instance, \cite{KL1},
\cite{KL2}, \cite{KK}, \cite{Kim03}, \cite{Lo} and references
therein). In this paper we allows the space domain to be Lipschitz
and the boundary condition can be non-zero. Moreover, when we do
not require the high regularity in $x$, we consider the joint
regularity in $(t,x)$ using anisotropic Besov spaces. The usage of
anisotropic Besov spaces is natural with the deterministic heat
equation.

Having said that, let us find a formal solution of (\ref{main
equation});  this will be a unique solution in an appropriate space.
Firstly, extend  $u_0$ on $\Rb$, $f$ and $g$ on $(0,T)\times\Rb$
(see Section \ref{Lemmas} for the mathematical details on these
extensions). Let $v$ be a solution of the Cauchy problem, i.e.
$D=\mathbb{R}^n$, consisting of \eqref{main equation} with the
extended $u_0$ as the initial condition. Let $\hat h$ denote the
Fourier transform of a function $h$ in $\Rb$. Taking Fourier
transform in space on  the equation, we have a stochastic
differential equation for each frequency $\xi\in\mathbb{R}^n$,
\begin{eqnarray}
d\hat{v}(t,\xi)=(-|\xi|^{2}\hat{v}(t,\xi)+\hat{f}(t,\xi)\;)dt+
\hat{g}(t,\xi)dw_t.\nonumber
\end{eqnarray}
Putting the terms with $\hat{v}$ together in the left hand side,
we get
\begin{eqnarray}
d\left(\hat{v}(t,\xi)\;e^{|\xi|^{2}t}\right)=
e^{|\xi|^{2}t}\hat{f}(t,\xi)dt+
e^{|\xi|^{2}t}\hat{g}(t,\xi)dw_t\nonumber
\end{eqnarray}
and hence
\begin{eqnarray}
\hat{v}(t,\xi)=e^{-|\xi|^{2}t}\hat{u}_0(\xi)+
\int^t_0e^{-|\xi|^{2}(t-s)}\hat{f}(s,\xi)ds+
\int^t_0e^{-|\xi|^{2}(t-s)}\hat{g}(s,\xi)dw_s.\nonumber
\end{eqnarray}
Taking the inverse Fourier transform, we obtain
\begin{eqnarray}
\begin{array}{l}
v(t,x)=(\Ga(t,\cdot)*_x  u_0)(x)+\int^t_0 (\;\Ga(t-s,\cdot)*_x
f(s,\cdot)\;)(x)ds\vspace{0.3cm} \\ \qquad\quad\quad+\int^t_0 (\; \Ga(t-s,\cdot)*_x
g(s,\cdot)\;)(x)dw_s,\quad t>0,\;x\in\Rb,
\label{solution}
\end{array}
\end{eqnarray}
where $\Ga(t,x):= \frac{1}{(4\pi
t)^{\frac{n}{2}}}e^{-\frac{|x|^2}{4t}}I_{t>0}$ is the inverse
Fourier transform of $e^{- |\xi|^2 t}I_{t>0}$ and $*_x$ denotes
convolution on $x$. We restrict $v$ on $\Omega\times (0,T) \times
D$. Secondly, we find  the solution $h=h(\omega,t,x)$ of the
following simple (stochastic) initial-boundary value problem:
\begin{align}\label{heat}
\left\{\begin{array}{l}
h_{t}(\omega,t,x)= \De h(\omega,t,x), \quad (\omega,t,x)\in \Omega\times (0,T) \times D,\vspace{0.3cm}\\
h(\omega,t,x) =b(\omega,t,x) - v(\omega,t,x),\quad (\omega,t,x)\in\Omega\times (0,T) \times \pa D,\vspace{0.3cm}\\
h(\omega,0,x) = 0,\quad \omega\in\Omega,\;x\in D.
\end{array}
\right.
\end{align}
Then one can easily check that $u = v+ h$ is indeed a solution of
\eqref{main equation}. Since information of $h$ is well known, the
estimations of three parts of $v$ in \eqref{solution} are important
to us; especially the third one, the \emph{stochastic integral
part}.

We are to find a solution space for $u$ and the spaces for
$f,g,b,u_0$ so that the restriction of the three terms in the right
hand side of (\ref{solution}) on $\Omega\times (0,T)\times D$ and
$h$ belong to the solution space and moreover $u$ is unique in it.
We use two types of spaces in this paper; spaces of Bessel
potentials and Besov spaces.

In this paper we let $n\geq 2$, $0 < T < \infty$, and $D$ be a
bounded Lipschitz domain in $\Rb$. Denote
\begin{eqnarray}
D_T := (0,T)\times D,\quad  \pa D_T :=  (0,T)\times \pa D, \quad
\Rb_T :=(0,T)\times  \Rb.\nonumber
\end{eqnarray}
Also, we assume $2\le p<\infty$ instead of the usual deterministic
setup $1<p<\infty$ ; this restriction is due to the stochastic
part in (\ref{main equation}) (see \cite{Kr94}). The main result
in this paper is the following.
\begin{theo}\label{theo}
Let $2\le p<\infty$ and $\frac1p<k<1+\frac1p$. Assume $f \in
{\mathbb B}^{k -2, \frac12(k -2)}_{p,o}(D_T), \, g \in {\mathbb
B}^{k-1}_{p,o} (D_T), \,\,
 b \in {\mathbb B}^{k -\frac1p,\frac12(k-\frac1{p})}_p(\pa D_T)$
and $u_0 \in L^p(\Om, {\mathcal G}_0, \mathcal{U}^{k
-\frac2p}_p(D))$. If $ \frac3p< k<1+\frac{1}{p}$, we further
assume the compatibility condition $u_0(\om,x) = b(\om,0,x)$ for
$\om \in \Om$, $x \in \pa D$. Then
\begin{itemize}
\item[(1)] if $ \frac1p< k <1$, there is a unique solution $u\in
{\mathbb B}^{k,\frac12k}_p (D_T)$ of the initial boundary value
problem \eqref{main equation} such that
\begin{align}\label{equality}
\begin{array}{ll}
 \| u\|_{{\mathbb B}^{k,\frac12k}_p (D_T)} &  \leq c\Big(\|u_0\|_{ L^p(\Om, {\mathcal G}_0,  \mathcal{U}^{  k - \frac{2}p
}_p(D))} + \| f \|_{{\mathbb B}^{k -2,\frac12(k-2) }_{p,o}(D_T)}\\
& \quad\quad\quad   + \| g \|_{{\mathbb B}^{k-1}_{p,o} (D_T) } +
\| b \|_{{\mathbb B}^{k -\frac1p,\frac12(k -\frac1{p})}_p( \pa
D_T)}\;\; \Big),
\end{array}
\end{align}
where $c$ depends only on  $D, k,n,p,T$. \item[(2)] if $1 \leq k <
1 +\frac1p$, there is a unique solution $u\in {\mathbb B}^k_p(D_T)
$
 of the  problem \eqref{main equation} such that
\begin{align}\label{Mainequ}
\begin{array}{ll}
 \|u\|_{{\mathbb B}^k_p(D_T) }
&   \leq c\Big(\|u_0\|_{ L^p(\Om, {\mathcal G}_0,  \mathcal{U}^{
k - \frac{2}p
}_p(D))}+\|f\|_{{\mathbb B}^{k -2, \frac12( k-2)}_{p,o}(D_T) }\\
& \quad\quad\quad  + \| g \|_{{\mathbb B}^{k-1}_{p,o} (D_T) } + \|
b \|_{{\mathbb B}^{k -\frac1p,\frac12(k-\frac1{p})}_p(\pa
D_T)}\;\;\Big),
\end{array}
 \end{align}
where $c$ depends only on $D, k,n,p,T$.
\end{itemize}
\end{theo}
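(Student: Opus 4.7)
The plan is to follow the decomposition $u = v + h$ sketched in the introduction and reduce the analysis to (a) sharp estimates for the Cauchy solution $v$ on the whole space and (b) a deterministic boundary value problem for $h$. First I would use the extension lemmas (referenced to Section \ref{Lemmas}) to extend $u_0, f, g$ from $D$ (resp.\ $D_T$) to $\Rb$ (resp.\ $\Rb_T$) with control of the relevant norms $\mathcal{U}^{k-2/p}_p$, ${\mathbb B}^{k-2,(k-2)/2}_{p,o}$, ${\mathbb B}^{k-1}_{p,o}$, so that after the extensions are fixed one may work with $v$ given by the explicit representation \eqref{solution} on $\Rb_T$.

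Next I would estimate each of the three summands in \eqref{solution} separately in ${\mathbb B}^{k,k/2}_p(\Rb_T)$ or ${\mathbb B}^k_p(\Rb_T)$ (depending on the range of $k$). The heat extension $\Ga(t,\cdot)*u_0$ is handled by the standard trace/extension theory linking $\mathcal{U}^{k-2/p}_p$ to the parabolic Besov space on the slab. The deterministic convolution $\int_0^t \Ga(t-s,\cdot)*f(s,\cdot)\,ds$ is controlled by parabolic maximal regularity, gaining two spatial (or one temporal) derivative relative to $f$. The genuinely new piece is the stochastic convolution $I(t,x):=\int_0^t (\Ga(t-s,\cdot)*g(s,\cdot))(x)\,dw_s$; here I would take $L^p(\Om)$ norms, apply Burkholder–Davis–Gundy to replace the stochastic integral by the $L^2_s$ norm of the integrand, and then apply Littlewood--Paley / heat-semigroup estimates to see that the map $g\mapsto I$ loses (essentially) one derivative in space and half a derivative in time--consistent with the pairing ${\mathbb B}^{k-1}_{p,o}\mapsto {\mathbb B}^{k,k/2}_p$ announced in the theorem. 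The assumption $p\ge 2$ is exactly what is needed in this BDG step in order to move the $L^p(\Om)$ norm inside the square-function.

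Once $v$ is controlled globally, I restrict it to $\Om\times D_T$ and take its trace on $\pa D_T$. By the parabolic trace theorem the map ${\mathbb B}^{k,k/2}_p(D_T)\to {\mathbb B}^{k-1/p,(k-1/p)/2}_p(\pa D_T)$ is bounded (when the trace is defined, i.e.\ $k>1/p$, and for $k>3/p$ the compatibility condition $u_0(\om,x)=b(\om,0,x)$ ensures that $b-v|_{\pa D_T}$ still lies in this Besov class with vanishing time trace at $t=0$). Then $h$ is the solution of the deterministic initial-boundary problem \eqref{heat} with boundary datum $b-v|_{\pa D_T}$ and zero initial datum, pathwise in $\om$; the results of \cite{B2} and \cite{JM} for the Lipschitz-cylinder heat equation on $D_T$ provide
\[
\|h\|_{{\mathbb B}^{k,k/2}_p(D_T)} \leq c\,\|b-v|_{\pa D_T}\|_{{\mathbb B}^{k-1/p,(k-1/p)/2}_p(\pa D_T)},
\]
with the analogous ${\mathbb B}^k_p$ statement when $1\le k<1+1/p$. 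Adding the estimates for $v$ and $h$ yields \eqref{equality} and \eqref{Mainequ}. Uniqueness follows by linearity: if two solutions $u_1,u_2$ exist in the solution space, their difference solves \eqref{main equation} with $f=g=b=0$, $u_0=0$, so it has to be identically zero by the uniqueness part of the same deterministic Lipschitz-domain result applied pathwise.

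The main obstacle I expect is the sharp mapping property for the stochastic convolution $g\mapsto I$ on the scale of anisotropic Besov spaces on $\Rb_T$: one needs to show simultaneously the correct spatial and temporal gains and to handle the interaction between the vanishing trace at $t=0$ (the subscript ``$o$'') and the BDG/square-function estimate. A secondary technical point is the two-case split at $k=1$, where the target space changes from the anisotropic ${\mathbb B}^{k,k/2}_p$ to the isotropic ${\mathbb B}^k_p$; this requires matching Besov exponents carefully in both the trace estimate for $v|_{\pa D_T}$ and the deterministic estimate for $h$, and explains why the compatibility condition enters exactly when $k>3/p$.
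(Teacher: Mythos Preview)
Your outline for part (1) is essentially the paper's own argument: the same decomposition $u=v+h$, the same three estimates for $v_1,v_2,v_3$ (Lemmas \ref{prop1}, \ref{prop2}, \ref{maintheo}(1)), the trace Lemma \ref{prop5}, and the deterministic Lipschitz-cylinder result (Lemma \ref{prop4}) for $h$. Nothing to add there.

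The gap is in part (2). You write that the stochastic convolution maps ${\mathbb B}^{k-1}_{p,o}\to {\mathbb B}^{k,k/2}_p$ and then appeal to the parabolic trace theorem, ``with the analogous ${\mathbb B}^k_p$ statement when $1\le k<1+1/p$''. But there is no such analogous statement: for $k\ge 1$ the stochastic convolution $v_3$ genuinely fails to lie in the anisotropic space ${\mathbb B}^{k,k/2}_p$. A BDG estimate gives at best $E|v_3(t,x)-v_3(s,x)|^p\lesssim |t-s|^{p/2}$, so the seminorm $\int\int |t-s|^{-1-pk/2}|v_3(t)-v_3(s)|^p\,ds\,dt$ diverges once $k\ge 1$. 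Consequently you cannot feed $v_3$ into the trace theorem ${\mathbb B}^{k,k/2}_p(D_T)\to {\mathbb B}^{k-1/p,(k-1/p)/2}_p(\pa D_T)$, and the purely spatial membership $v_3\in {\mathbb B}^k_p(D_T)$ is not enough to produce a boundary datum in the \emph{anisotropic} class that Lemma \ref{prop4} requires.

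The paper closes this gap with Lemma \ref{maintheo}(2): a \emph{direct} estimate of $v_3|_{\pa D_T}$ in ${\mathbb B}^{k-1/p,(k-1/p)/2}_p(\pa D_T)$ that bypasses the interior anisotropic space entirely. The proof (Section \ref{20101216-2}) combines BDG with pointwise bounds on $\int_{\pa D}\Ga(t,x-y)\,d\sigma(x)$ in terms of $\de(y)=\mathrm{dist}(y,\pa D)$, and then invokes a Hardy-type inequality $\int_D \de(y)^{-p(k-1)}|g(y)|^p\,dy\lesssim \|g\|_{H^{k-1}_{p,o}(D)}^p$ (Lemma \ref{lemma-complex}); this is where the ``$o$'' in ${\mathbb B}^{k-1}_{p,o}$ is actually used. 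Your proposal does not contain this ingredient, and ``matching Besov exponents carefully'' will not produce it.
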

The explanation of spaces and notations appearing in Theorem
\ref{theo} is placed in Section \ref{Pre-1}.
\begin{rem}
1. In the part (1) of Theorem \ref{main equation} we estimate the
regularity of $u$ in $(t,x)$ simultaneously using anisotropic Besov
norm whereas in part (2) we focus on the regularity in $x$. As we
mentioned earlier, the regularity in time is limited while the one
in space is not.

2. If $g\equiv 0$ and $u_0\equiv 0$, then (1) of Theorem \ref{main
equation} coincides with \cite{JM}.
\end{rem}

We organized the paper in the following way. Section \ref{Pre-1}
explains spaces and notations. In Section 3 we place main lemmas
and the proof of Theorem \ref{theo}. The long proofs of some main
lemmas are located in Section \ref{20110324-1}, \ref{20110324-2},
\ref{20101216-1} and \ref{20101216-2}.

Throughout this paper we denote $A \thickapprox B$ when there are
positive constants $c_1$ and $c_2$ such that $c_1 A \leq B \leq
c_2 A$. Also, $A \lesssim B$ means that there is a positive
constant $c$ such that $A \leq c B$. All such constants depend
only on $n,k,p,T$ and the Lipschitz constant of $\partial D$. We
use the notations $a\vee b=\max\{a,b\}$, $a\wedge b=\min\{a,b\}$.

\mysection{Preliminaries}\label{Pre-1} \setcounter{equation}{0}

Throughout this paper we let $(\Om, {\mathcal G},\{{\mathcal
G}_t\}, P)$ be a probability space, where $\{{\mathcal G}_t\;|\; t
\geq 0\}$ be a filtration of $\si$-fields ${\mathcal G}_t \subset
{\mathcal G}$ with $\mathcal{G}_0$ containing all $P$-null subsets
of $\Om$. Assume that a one-dimensional $\{\mathcal G_t
\}$-adapted Wiener processes $w_{\cdot}$ is defined on $(\Om,
{\mathcal G}, P )$. We denote the mathematical expectation of a
random variable $X=X(\omega),\;\omega\in \Omega$ by $E[X]$ or
simply $EX$; we suppress the argument $\omega\in\Omega$ under the
expectation $E$.

For $k\in\mathbb{R}$ let ${H}^k_p(\Rb)$ be the space of Bessel
potential and $B^k_p(\Rb)$ be  the Besov space (see, for instance,
\cite{BL}, \cite{Tr83}). For later purpose we place a definition
of Besov spaces. Let $\hat f(\xi),\xi\in\mathbb{R}^n$ denote the
Fourier transform of $f(x),x\in\mathbb{R}^n$ and the space
${\mathcal S} ({\mathbb R}^{n})$ denote the \emph{Schwartz space}
on $\mathbb R^n$.
 Fix any $\phi\in {\mathcal S} ({\mathbb R}^{n})$  such that
$\hat{\phi}$ satisfies $\hat{\phi}(\xi)
> 0$ on $\frac12 < |\xi| < 2$, $\hat{\phi}(\xi)=0$ elsewhere, and
$\sum_{j=-\infty}^{\infty} \hat{\phi}(2^{-j}\xi) =1$ for $\xi \neq
0$. We define $\phi_j$ and $\psi$ so that their Fourier transforms
are given by
\begin{eqnarray}\label{psi1}
\begin{array}{ll}
\widehat{\phi_j}(\xi) &= \widehat{\phi}(2^{-j} \xi) \quad (j = 0, \pm 1, \pm 2 , \cdots)\\
\widehat{\psi}(\xi) & = 1- \sum_{j=1}^\infty \widehat{\phi} (2^{-j} \xi).
\end{array}
\end{eqnarray}
Then we define the Besov space $B^k_{p} ({\mathbb R}^{n})=
B^k_{p,p} ({\mathbb R}^{n})$ by
\begin{eqnarray*}
B^k_{p} ({\mathbb R}^{n}) = \{ f \in {\mathcal D}({\mathbb R}^{n})
\,\,\, |\, \,\,\, \|f\|_{B^k_{p}}:=\| \psi
* f\|_{L^p} + \Big[ \sum_{j=1}^{\infty} (2^{k j} \|\phi_j *
f\|_{L^p})^p\Big]^{\frac1p} < \infty \;\},
\end{eqnarray*}
where $ {\mathcal D}({\mathbb R}^{n})$ is dual space of Schwartz
space and  $*$ means the convolution.

\subsection{Spaces for $D$, $\partial D$ and $(0,T)$}\label{space}

When $k\geq 0$, we define
\begin{align*}
H^k_p(D) : = \{ F|_D \, \big| \, F \in H^k_p(\Rb)\}, \quad
B^k_p(D): = \{ F|_D \; \big| \; F \in B^k_p(\Rb)\},\quad
\textrm{resp.}
\end{align*}
with the norms
\begin{align*}
\| f\|_{H^k_p(D)} := \inf \| F\|_{H^k_p(\Rb)},\quad \|
f\|_{B^k_p(D)} := \inf \| F\|_{B^k_p(\Rb)},\quad \textrm{resp.},
\end{align*}
where the  infima are   taken over $F \in H^k_p(\Rb)$ or $F \in
B^k_p(\Rb)$ satisfying $ F|_D =f $. We also define ${B^k_{p,o}}(D) $
as the closure of $C^\infty_c(D)$  in $B^k_p(D)$.
\begin{rem}\label{rem2}
Let $k_0$ be a nonnegative integer. Then the followings hold.
\begin{itemize}
\item[(1)]\begin{align*} \|f\|_{H^{k_0}_p(D)}^p \thickapprox
\sum_{0 \leq |\be| \leq k_0} \|D^\be f\|_{L^p(D)}^p,
\end{align*}
where $D^\be = D_{x_1}^{\be_1} D_{x_2}^{\be_2} \cdots
D^{\be_n}_{x_n}$ for $\be =(\be_1, \be_2, \cdots, \be_n) \in
(\{0\}\cup {\mathbb N})^n$.

\item[(2)] For  $k\in (k_0,k_0 +1)$
\begin{align*}
   \|f\|^p_{B^k_{p}(D)} \thickapprox
\| f\|^p_{H^{k_0}_p(D) } +  \sum_{|\be| =k_0 }\int_{D} \int_{D }
\frac{|D^\be f(x) - D^\be f(y)|^p}{|x-y|^{n+p(k-k_0)}} dxdy.
\end{align*}
The spaces $B^k_p(\pa D)$, $k\in (0,1)$ are defined similarly.

\item[(3)] Let $k=k_0+\theta$ with $\theta\in (0,1)$. Then the
space $B^k_p(D)$ satisfies the following real interpolation
property $($see Section 2 of \cite{JK}\:$)$:
\begin{align}\label{interpolation}
(H^{k_0}_p(D), H^{k_0 +1}_p(D))_{\te, p} = B^{k}_p(D).
\end{align}

\end{itemize}
\end{rem}

\quad When $k<0$ we define $B^k_p(D)$ as the dual space of
${B}^{-k}_{q,o} (D)$ and ${B}^k_{p,o}(D)$ as the dual space of
$B^{-k}_q(D)$, i.e., $B^k_p(D)=({B}^{-k}_{q,o} (D))^*$,
${B}^k_{p,o}(D)=(B^{-k}_q(D))^*$ with $\frac{1}{p}+\frac{1}{q}=1$.

We define $H^{\frac12 k}_p(0,T)$, $B^k_{p}(0,T)$ and
$B^k_{p,o}(0,T)$ similarly.
\begin{rem}
By the subscript $o$ in ${B}^k_{p,o}(D)$ $(k<0)$ we mean that the
natural extension of any distribution in this space vanishes
outside $D$ in the following sense. Let $h\in
{B}^k_{p,o}(D)=({B}^{-k}_{q}(D))^*$. We define the extension
$\tilde{h}\in {B}^k_{p}(\mathbb{R}^n)$ of $h$ by
\begin{eqnarray}
<\tilde{h},\Phi>:=<h,\Phi|_D>,\quad \Phi\in
B^{-k}_q(\mathbb{R}^n);\nonumber
\end{eqnarray}
note that by the very definition of $B^{-k}_q(D)$ we have
$\Phi|_D\in B^{-k}_q(D)$ and $<h,\Phi|_D>$ is well defined; here
the condition that $D$ is Lipschitz is used. Then for any  $\Phi$
with its support outside $D$, then $<\tilde{h},\Phi>=0$. This
means that $\tilde h$ vanishes outside $D$. A similar reasoning
says that the extension of any distribution in $B^k_p(D)$ may not
vanish outside $D$ and hence we do not add the subscript $o$.
\end{rem}

\quad For the initial condition $u_0$ we need
\begin{eqnarray}\label{initial}
{\mathcal U}^k_p(D) :=
\left\{\begin{array}{l}\vspace{0.2cm}
{B}^k_p(D),\quad k \geq 0,\\
{B}^k_{p,o}(D),\quad k <0.
\end{array}
\right.
\end{eqnarray}

\subsection{Spaces for $D_T$, $\partial D_T$}\label{cylinder}
For $k \geq 0$ we define the anisotropic Besov space $
{B}^{k,\frac12k}_p(D_T)$ by
\begin{align}\label{besov}
{B}^{k,\frac12k}_p(D_T) :=  L^p\Big((0,T); B^k_p(D)\Big) \cap
L^p\left(D; B^{\frac{k}2}_p ((0,T))\right)
\end{align}
with the norm
\begin{align}
\| f \|_{{B}^{k,\frac12k}_p (D_T)}: = \Big(\int_0^T \|
f(t,\cdot)\|^p_{B^k_p(D)} dt \Big)^\frac1p + \Big(\int_D \|
f(\cdot, x)\|^p_{B^{\frac{k}2}_p((0,T))} dx
\Big)^\frac1p,\label{anisotropic norm}
\end{align}
where $B^{\frac{k}{2}}_p((0,T))$ is defined similarly as in
Section \ref{space}; we also define
\begin{eqnarray}
{{B}}^{k,\frac12k}_{p,o}(D_T) =  L^p\Big((0,T); {
B}^k_{p,o}(D)\Big) \cap L^p\Big(D; {B}^{\frac{k}2}_{p,o}
((0,T))\Big)\nonumber
\end{eqnarray}
with the same norm (\ref{anisotropic norm}).

For $k<0$ we define $ B^{k, \frac12k}_p(D_T) = (  {B}^{-k,
-\frac12k}_{q,o}(D_T))^*$ and ${B}^{k, \frac12k}_{p,o}(D_T) = (
B^{-k, -\frac12k}_q(D_T))^*$ with $\frac{1}{p}+\frac{1}{q}=1$.

We define ${B}^{k,\frac12k}_p(\pa D_T)$, ${B}^{k,\frac12k}_{p,o}(\pa
D_T)$, $k\in(0,1)$ similarly.

\subsection{Stochastic Banach spaces} The solution $u$ and
functions $f,g,b,u_0$ in (\ref{main equation}) are all random. Using
Section \ref{space} and \ref{cylinder} we construct the spaces for
them. We describe two types of spaces. The first type emphasizes the
regularity in $x$ whereas the second type does the regularity in
$t,x$ together. Again, let $k\in\mathbb{R}$.

We can consider $u,f,g,b$ as function space-valued stochastic
processes and hence $(\Omega\times(0,T),\mathcal{P},$
$P\bigotimes\ell((0,T]))$ is a suitable choice for their common
domain, where ${\mathcal P}$ is the predictable $\si$-field
generated by $\{{\mathcal G}_t : t \geq 0\}$ (see, for instance, pp.
84--85 of \cite{K1}) and $\ell((0,T])$ is the Lebesgue measure on
$(0,T)$. We define
\begin{align*}
&{\mathbb H}^k_p(\Rb_T) = L^p( \Om\times (0,T), {\mathcal P}, H^k_p
(\Rb)),\quad {\mathbb B}^k_p(\Rb_T) = L^p( \Om\times (0,T),
{\mathcal P}, B^k_p (\Rb))
\end{align*}
and the norms
\begin{align*}
\| f\|_{{\mathbb H}^k_p(\Rb_T)} = \Big( E \int_0^T \|
f(s,\cdot)\|^p_{H^{k}_p (\Rb)}ds \Big)^\frac{1}p, \quad  
\| f\|_{{\mathbb B}^k_p(\Rb_T)} = \Big( E \int_0^T \|
f(s,\cdot)\|^p_{B^{k}_p (\Rb)}ds \Big)^\frac{1}p 
\end{align*}
; we  suppress $\omega$ in $f$. Similarly we define
\begin{eqnarray}
{\mathbb H}^k_p(D_T) = L^p( \Om\times (0,T], {\mathcal P}, H^k_p
(D)),\quad {\mathbb B}^k_p(D_T) = L^p( \Om\times (0,T), {\mathcal
P}, B^k_p (D)),\nonumber
\end{eqnarray}
\vspace{-1cm}
\begin{eqnarray}
{\mathbb B}^k_{p,o}(D_T) = L^p( \Om\times (0,T), {\mathcal P},
{B}^k_{p,o} (D)).\nonumber
\end{eqnarray}

We also define the stochastic \emph{anisotropic Besov} spaces
\begin{align*}
&{\mathbb B}^{k,\frac12k}_p(D_T) = L^p( \Omega, {\mathcal G},
{B}^{k, \frac12k}_p(D_T)), \quad  {\mathbb B}^{k,\frac12k}_p(\pa
D_T) = L^p( \Omega, {\mathcal G}, {B}^{k, \frac12k}_p(\pa D_T))
\end{align*}
with norms
\begin{align*}
\| f\|_{{\mathbb B}^{k,\frac12k}_p(D_T)} = \left( E  \|
f\|_{B^{k,\frac12k}_p(D_T)}^p \right)^\frac{1}p,\quad \|
f\|_{{\mathbb B}^{k,\frac12k}_p(\pa D_T)} = \left( E  \|
f\|_{B^{k,\frac12k}_p(\pa D_T)}^p \right)^\frac{1}p.
\end{align*}
Similarly we define ${\mathbb B}^{k,\frac12k}_{p,o}(D_T) = L^p(
\Omega, {\mathcal G}, {B}^{k, \frac12k}_{p,o}(D_T))$.

\section{Lemmas and Proof of Theorem \ref{theo}}\label{Lemmas}
\setcounter{equation}{0}

In this section we estimate the three terms of (\ref{solution}) and
prove our main theorem.

For $l<0$, if $h \in {B}_{p,o}^{l}(D)=(B^{-l}_q(D))^*$, then we
define $\tilde h \in  B_p^{l}(\Rb)$ as the trivial extension of $h$
by
\begin{align}\label{extension1}
<\tilde h, \phi> : = <h, \phi|_D>, \quad \phi \in B^{-l}_q(\Rb),
\end{align}
; note $\|\tilde h\|_{B_p^{l}(\Rb)} \thickapprox
\|h\|_{{B}_{p,o}^{l}(D)}$. For $l\ge 0$, if $h \in  B_p^{l}(D)$,
then  we define $\tilde h \in B_p^{l}(\Rb)$ as the Stein's
extension of $h$ with $\|\tilde h\|_{B_p^l(\Rb)} \lesssim
\|h\|_{B_p^{l}(D)}$ (see section 2 of \cite{JK} and Chapter 6 of
\cite{St}); this extension is possible since our space domain $D$
is at least Lipschitz. Recall the definition of $\mathcal
U^l_p(D)$ in (\ref{initial}).
\begin{lemm}\label{prop1}
Let $0<k<2$. We assume $ u_0(\omega,\cdot) \in  \mathcal
U_p^{k-\frac2p} (D)$ for each $\omega\in\Omega$. Let $\tilde{u}_0$
denote the extension of $u_0$  (trivial or Stein's). For each
$(\omega,t,x)\in \Omega\times(0,T)\times \mathbb{R}^n$ define
\begin{align}\label{u-1}
v_1(\omega,t,x) := \left\{ \begin{array}{ll}
<\tilde u_0(\omega,\cdot), \Ga(t, x -\cdot)>, \qquad\;\;\;\textrm{if}\;\;  0\le k <\frac2p ,\\
\\
\int_{\Rb} \Ga(t, x-y) \tilde u_0(\omega,y)\; dy,\qquad\textrm{if}\;\;\frac2p \leq k.
\end{array}
\right.
\end{align}
Then $ v_1(\omega,\cdot,\cdot) \in {B}^{k,\frac12k }_p ( \Rb_T )$
for each $\omega$ and
\begin{align}\label{estimate 1}
\| v_1(\omega,\cdot,\cdot)\|_{{B}^{k, \frac12 k}_p ( \Rb_T )} \le
c\, \| u_0(\omega,\cdot)\|_{\mathcal U_p^{k -\frac2p} (D)},\quad
\omega\in\Omega,
\end{align}
where $c$ is independent of $u_0$ and $\omega$.
\end{lemm}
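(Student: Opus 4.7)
The plan is to prove the pointwise (in $\omega$) bound
\[
\| v_1(\omega, \cdot, \cdot) \|_{B^{k, k/2}_p(\Rb_T)} \lesssim \| \tilde u_0(\omega, \cdot) \|_{B^{k - 2/p}_p(\Rb)},
\]
and then invoke the extension estimates recalled just before the statement of the lemma to replace the right-hand side by $\| u_0\|_{\mathcal{U}^{k - 2/p}_p(D)}$, which yields \eqref{estimate 1}. Since no stochastic integration is involved, $\omega$ plays only the role of a parameter and can be suppressed throughout. On the Fourier side we have $\widehat{v}_1(t, \xi) = e^{-|\xi|^2 t} \widehat{\tilde u}_0(\xi)$, a formula that unifies the two cases of \eqref{u-1}; for $k < 2/p$ the pairing $\langle \tilde u_0, \Ga(t, x - \cdot)\rangle$ makes sense because $\Ga(t, x - \cdot) \in {\mathcal S}(\Rb)$ for each $t > 0$.

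For the spatial part $\int_0^T \| v_1(t, \cdot)\|^p_{B^k_p(\Rb)} dt$ I would apply the Littlewood-Paley decomposition from \eqref{psi1}. Since $\widehat{\phi_j}$ is supported in $|\xi| \thickapprox 2^j$, standard multiplier estimates give $\| \phi_j * v_1(t, \cdot)\|_{L^p} \leq e^{- c 2^{2j} t} \| \phi_j * \tilde u_0\|_{L^p}$. Multiplying by $2^{jkp}$, integrating in $t$, and using $\int_0^\infty e^{-cp 2^{2j} t} dt \thickapprox 2^{-2j}$ yields
\[
\int_0^T 2^{jkp} \| \phi_j * v_1(t, \cdot)\|^p_{L^p} dt \lesssim 2^{jp(k - 2/p)} \| \phi_j * \tilde u_0\|^p_{L^p}.
\]
Summing in $j \ge 1$, together with a routine treatment of the low-frequency term built from $\psi$, produces $\int_0^T \| v_1(t, \cdot)\|^p_{B^k_p(\Rb)} dt \lesssim \| \tilde u_0\|^p_{B^{k - 2/p}_p(\Rb)}$, the first half of the anisotropic Besov norm.

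For the second term $\int_\Rb \| v_1(\cdot, x)\|^p_{B^{k/2}_p((0,T))} dx$, since $k/2 \in (0,1)$ the difference characterization of Remark \ref{rem2} (2) applies in the $t$-variable, and after Fubini the task reduces to bounding
\[
\int_0^T \int_0^T \frac{\| v_1(t, \cdot) - v_1(s, \cdot)\|^p_{L^p(\Rb)}}{|t - s|^{1 + pk/2}}\, dt\, ds
\]
plus a lower-order $L^p_{t,x}$ term. The frequency-localized estimate $\| \phi_j * (v_1(t) - v_1(s))\|_{L^p} \lesssim e^{- c 2^{2j} s} \min(1, 2^{2j} |t - s|) \| \phi_j * \tilde u_0\|_{L^p}$, combined with an elementary splitting of the time integral at $|t - s| \thickapprox 2^{-2j}$, gives a per-block contribution of $2^{jp(k - 2/p)} \| \phi_j * \tilde u_0\|^p_{L^p}$.

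The main obstacle I foresee is assembling these block estimates into a bound on $\| v_1(t) - v_1(s)\|^p_{L^p(\Rb)}$ itself: naive triangle-inequality summation is lossy, and one cannot identify $\sum_j \| \phi_j * f\|^p_{L^p}$ with $\| f\|^p_{L^p}$ when $p > 2$. The cleanest route is to read the desired bound as an instance of the anisotropic Besov trace theorem — ${B}^{k, k/2}_p(\Rb_T)$ has trace $B^{k - 2/p}_p(\Rb)$ on $\{t = 0\}$, with a bounded right inverse given precisely by the heat convolution above. I would carry this out via an anisotropic Littlewood-Paley decomposition on $\Rb_T$ itself (parabolic dyadic blocks $|\xi| \thickapprox 2^j$, $|\tau| \thickapprox 2^{2j}$), then use the real interpolation identity \eqref{interpolation} to handle integer values of $k/2$ and the threshold $k = 2/p$, where the difference characterization degenerates.
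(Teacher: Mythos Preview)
Your spatial estimate via block-by-block Littlewood--Paley is correct and in fact more direct than what the paper does for that half of the norm. You are also right that the naive block summation breaks down for the time-regularity piece when $p>2$: one cannot pass from $\sum_j \|\phi_j * f\|_{L^p}^p$ back to $\|f\|_{L^p}^p$ in that regime, and the triangle inequality loses a full $\ell^1$ sum.

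The paper takes a different route that sidesteps this obstacle entirely: it proves only the two endpoint cases $k=0$ and $k=2$ and then real-interpolates. The case $k=2$ is the classical estimate
\[
\int_0^T \|v_1(t,\cdot)\|_{H^2_p(\Rb)}^p\,dt + \int_{\Rb} \|v_1(\cdot,x)\|_{H^1_p(0,T)}^p\,dx \lesssim \|\tilde u_0\|_{B^{2-2/p}_p(\Rb)}^p
\]
from \cite{LSU67}, which already controls both the spatial and temporal parts of the anisotropic norm. The case $k=0$ is just $\|v_1\|_{L^p(\Rb_T)} \lesssim \|\tilde u_0\|_{B^{-2/p}_p(\Rb)}$; here the paper does use a Littlewood--Paley decomposition and handles exactly the $\ell^1$-to-$\ell^p$ loss you flagged by inserting geometric weights $2^{\pm a j}$ and applying H\"older before integrating in $t$, splitting the sum at $t\,2^{2j}\approx 1$ (Lemma~\ref{frac3p}). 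Real interpolation of these two endpoints, using
\[
(L^p(\Rb), H^{2}_p(\Rb))_{k/2,p} = B^k_p(\Rb),\quad (L^p(0,T), H^1_p(0,T))_{k/2,p} = B^{k/2}_p(0,T),
\]
\[
(B^{-2/p}_p(\Rb), B^{2-2/p}_p(\Rb))_{k/2,p} = B^{k-2/p}_p(\Rb),
\]
gives the full range $0<k<2$ at once.

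Your trace-theorem or anisotropic-LP workarounds would eventually succeed, but the paper's endpoint-plus-interpolation argument is shorter and self-contained: it never has to control time differences block by block, and the only nontrivial Littlewood--Paley computation is pushed into the single scalar estimate at $k=0$, where no time-regularity is required.
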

; the proof is presented in Section \ref{20110324-1}. \vspace{0.3cm}

For $0 < k <2$ and $ h=h(t,x) \in {B}^{ k-2, \frac12k
-1}_{p,o}(D_T)$ we define $\tilde h \in B^{ k-2, \frac12 k-1
}_p({\mathbb R}^{n+1})$ by
\begin{align}\label{extension2}
<\tilde h, \phi> : = <h,\phi|_{D_T}>,\quad \phi \in B^{ 2-k,1-
\frac12k }_q({\mathbb R}^{n+1}).
\end{align}
In this case $\|\tilde h\|_{B_p^{ k -2,\frac12 k -1}(\mathbb
R^{n+1})} \thickapprox \|h\|_{{B}_{p,o}^{k-2,\frac12 k-1}(D_T)}$.
\begin{lemm}\label{prop2}
Let $ 0 <k < 2$ and $f\in {\mathbb B}^{k-2,\frac12k-1}_{p,o}(D_T)$. Define
\begin{align}\label{u-2}
v_2(\omega,t,x) := < \tilde f(\omega,\cdot,\cdot), \Ga(t-\cdot,
x-\cdot)>.
\end{align}
Then  $v_2 \in {\mathbb B}^{k,\frac12k }_p(\Rb_T )$ and
\begin{align}\label{estimate 2}
\| v_2\|_{{\mathbb B}^{k,\frac12k }_p(\Rb_T )} \leq c \|
f\|_{{\mathbb B}^{k-2,\frac12k-1}_{p,o}(D_T)}.
\end{align}
\end{lemm}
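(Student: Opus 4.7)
The plan is to reduce Lemma \ref{prop2} to a deterministic mapping property of the heat potential on parabolic Besov spaces over $\mathbb{R}^{n+1}$, apply that estimate $\omega$-by-$\omega$, and then integrate out $\Omega$.

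First I would set up the extension. For fixed $\omega$, since $f(\omega,\cdot,\cdot)\in {B}^{k-2,\frac12 k-1}_{p,o}(D_T)$, the dual-space definition of the $o$-subscript together with (\ref{extension2}) yields a trivial extension $\tilde f(\omega,\cdot,\cdot)\in B^{k-2,\frac12 k-1}_p({\mathbb R}^{n+1})$ with equivalent norms, whose distributional action vanishes outside $\overline{D_T}$. Then $v_2(\omega,t,x)$ as defined in (\ref{u-2}) is precisely the space--time distributional convolution $\Gamma *_{(t,x)} \tilde f(\omega,\cdot,\cdot)$ evaluated at $(t,x)$, since $\Gamma(t-\cdot,x-\cdot)$ belongs to the relevant test space $B^{2-k,1-\frac12k}_q(\mathbb R^{n+1})$ for each $(t,x)$.

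The main work is the deterministic estimate: for every $F\in B^{k-2,\frac12 k-1}_p({\mathbb R}^{n+1})$, the heat potential $V:=\Gamma*F$ satisfies
\begin{equation*}
\|V\|_{B^{k,\frac12 k}_p({\mathbb R}^{n+1})} \;\lesssim\; \|F\|_{B^{k-2,\frac12 k-1}_p({\mathbb R}^{n+1})}.
\end{equation*}
On the Fourier side $V$ corresponds to multiplication by $m(\tau,\xi)=(i\tau+|\xi|^2)^{-1}$, a symbol of parabolic order $-2$. I would first establish the bound at the integer endpoints $k\in\{0,1,2\}$ via classical $L^p$ parabolic regularity for $\partial_t V-\Delta V=F$ (the solution has zero data at $t=-\infty$ since supports are controlled), and then obtain the non-integer $k$ by real interpolation, using the identification in Remark \ref{rem2}(3) suitably extended to the anisotropic setting. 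The restriction of $V$ from ${\mathbb R}^{n+1}$ back to ${\mathbb R}^n_T$ is a contraction on these spaces, which gives (\ref{estimate 2}) pointwise in $\omega$.

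Finally I would take $L^p(\Omega)$-expectations of the $p$-th power of the pointwise bound; by Fubini and the measurability of $\omega\mapsto\tilde f(\omega,\cdot,\cdot)$ as a Besov-space-valued random variable, the result follows. The main obstacle will be the deterministic multiplier estimate itself, because the anisotropic norm in (\ref{anisotropic norm}) is a sum of two mixed-norm pieces rather than a genuine parabolic Littlewood--Paley Besov norm. One must therefore either prove the equivalence of the two descriptions, or argue for each of the two mixed-norm pieces separately by reducing to classical parabolic $L^p$-regularity at the integer exponents $k\in\{0,1,2\}$ and then interpolating; I expect this second route, interpolating between endpoints where the gain of two $x$-derivatives and one $t$-derivative is standard, to yield the cleanest argument.
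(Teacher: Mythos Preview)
Your overall strategy---reduce to an $\omega$-wise deterministic estimate, establish it at the endpoints $k=0$ and $k=2$, and real-interpolate---is exactly what the paper does. There is, however, a genuine gap in your deterministic step as written: the estimate $\|\Gamma*F\|_{B^{k,\frac12k}_p(\mathbb{R}^{n+1})}\lesssim\|F\|_{B^{k-2,\frac12k-1}_p(\mathbb{R}^{n+1})}$ fails on all of $\mathbb{R}^{n+1}$, because the symbol $(i\tau+|\xi|^2)^{-1}$ is singular at the origin; equivalently, $\Gamma*F$ need not lie in $L^p(\mathbb{R}^{n+1})$ even for $F\in L^p$, since classical maximal regularity only controls $\partial_t V$ and $D^2_x V$, not $V$ itself. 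So neither endpoint holds globally in $t$, and your restriction-to-$\mathbb{R}^n_T$ step cannot come \emph{after} the estimate---it must be built in.

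The paper repairs this by introducing the parabolic Bessel potential $\Pi_2(t,x)=e^{-t}\Gamma(t,x)$, whose symbol $(1+i\tau+|\xi|^2)^{-1}$ is regular at the origin and, by definition of $H^{-2,-1}_p$, maps $H^{-2,-1}_p(\mathbb{R}^{n+1})$ isometrically to $L^p(\mathbb{R}^{n+1})$. Writing $v_2(t,x)=e^{t}\bigl(\Pi_2*(e^{-s}\tilde f)\bigr)(t,x)$ and restricting to $t\in(0,T)$ absorbs the factor $e^{pT}$ into the constant; the bound $\|e^{-s}\tilde f\|_{H^{-2,-1}_p}\lesssim\|\tilde f\|_{H^{-2,-1}_p}$ follows by duality since $e^{-t}$ is a bounded pointwise multiplier on $H^{2,1}_q$. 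This yields the $k=0$ endpoint $\|v_2\|_{L^p(\mathbb{R}^n_T)}\lesssim\|\tilde f\|_{H^{-2,-1}_p(\mathbb{R}^{n+1})}$; combined with the classical $k=2$ endpoint and the real interpolations $(L^p,H^{2,1}_p)_{k/2,p}=B^{k,\frac12k}_p$ and $(H^{-2,-1}_p,L^p)_{k/2,p}=B^{k-2,\frac12k-1}_p$, the lemma follows. So your outline is right, but the target space must be taken on $\mathbb{R}^n_T$ from the start, and the $e^{-t}$/Bessel-potential device (or an equivalent localization in time) is the missing ingredient that makes the low endpoint work.
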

; the proof is in Section \ref{20110324-2}. \vspace{0.3cm}

Before we estimate $v_3$ let us place the following lemma which is
Exercise 5.8.6 in \cite{BL}:
\begin{lemm}\label{interpolationlemma}
Assume that $A_0$ and $A_1$ are Banach spaces and that $1 \leq p <
\infty$, $0 < \te < 1$. Then
\begin{align*}
(L_p(A_0), L_p (A_1))_{\te,p} = L_p ((A_0, A_1)_{\te,p}),
\end{align*}
where $(\cdot, \cdot)_{\te,p}$ is a real interpolation.
\end{lemm}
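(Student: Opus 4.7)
The plan is to invoke the $K$-method of real interpolation. For $a \in A_0 + A_1$ and $t > 0$, set
$$ K(t, a; A_0, A_1) := \inf_{a = a_0 + a_1}\bigl( \|a_0\|_{A_0} + t\|a_1\|_{A_1}\bigr), $$
so that $\|a\|_{(A_0, A_1)_{\te,p}}^p = \int_0^\infty (t^{-\te} K(t, a; A_0, A_1))^p \frac{dt}{t}$. The entire lemma reduces to the pointwise-in-$t$ equivalence
$$ K(t, f; L_p(A_0), L_p(A_1))^p \thickapprox \int K(t, f(x); A_0, A_1)^p \, dx \qquad (\star) $$
combined with Tonelli's theorem; indeed, from $(\star)$ I would get
$$ \|f\|_{(L_p(A_0), L_p(A_1))_{\te,p}}^p \thickapprox \int\int_0^\infty t^{-\te p} K(t, f(x); A_0, A_1)^p \frac{dt}{t}\, dx = \|f\|_{L_p((A_0, A_1)_{\te,p})}^p, $$
which is the claim.

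The direction $\gtrsim$ of $(\star)$ is essentially Minkowski's inequality: given any decomposition $f = f_0 + f_1$ with $f_i \in L_p(A_i)$, the pointwise bound $K(t, f(x); A_0, A_1) \le \|f_0(x)\|_{A_0} + t\|f_1(x)\|_{A_1}$ integrates, after raising to the $p$-th power, to
$$ \left( \int K(t, f(x); A_0, A_1)^p \, dx \right)^{1/p} \le \|f_0\|_{L_p(A_0)} + t\|f_1\|_{L_p(A_1)}; $$
taking the infimum over such splittings on the right-hand side gives $\gtrsim$ with constant $1$.

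The reverse direction $\lesssim$ of $(\star)$ is where I expect the main obstacle. One must produce, for each $\varepsilon > 0$, \emph{measurable} maps $x \mapsto f_0(x) \in A_0$ and $x \mapsto f_1(x) \in A_1$ with $f_0(x) + f_1(x) = f(x)$ almost everywhere and
$$ \|f_0(x)\|_{A_0} + t\|f_1(x)\|_{A_1} \le (1 + \varepsilon)\, K(t, f(x); A_0, A_1) \quad \text{a.e.} $$
For general Banach spaces $A_0, A_1$ such a near-optimal splitting requires a measurable selection argument of Kuratowski--Ryll-Nardzewski or Aumann type. A cleaner route I would take is to first verify $(\star)$ for $(A_0 + A_1)$-valued simple functions, where the splitting can be chosen finitely and trivially measurably on each level set, and then extend to arbitrary $f \in L_p(A_0) + L_p(A_1)$ by density, using that both sides of $(\star)$ define continuous seminorms in $f$. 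Once such $f_0, f_1$ are in hand, the two bounds $\|f_0(x)\|_{A_0} \le (1+\varepsilon) K(t, f(x); A_0, A_1)$ and $t\|f_1(x)\|_{A_1} \le (1+\varepsilon) K(t, f(x); A_0, A_1)$ integrate to yield
$$ \|f_0\|_{L_p(A_0)} + t\|f_1\|_{L_p(A_1)} \le 2(1+\varepsilon)\left(\int K(t, f(x); A_0, A_1)^p\, dx\right)^{1/p}, $$
and sending $\varepsilon \to 0$ completes $(\star)$ up to a universal constant depending only on $p$. Everything else is soft.
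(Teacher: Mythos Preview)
Your argument is correct and is essentially the standard proof via the $K$-functional equivalence $(\star)$; the measurable-selection issue you flag is real, and your workaround through simple functions and density is the usual way to handle it. Note, however, that the paper does not actually prove this lemma: it merely cites it as Exercise~5.8.6 in Bergh--L\"ofstr\"om \cite{BL}, so there is no ``paper's own proof'' to compare against. What you have written is a faithful solution of that exercise, and in that sense your approach is exactly what the reference intends.
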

If  $ 0<k<1$, then for $ g=g(\omega,t,x) \in {\mathbb B}^{k
-1}_{p,o}(D_T)$ we define $\tilde g \in \mathbb B^{k-1}_p({\mathbb
R}^{n+1})$ by
\begin{align}
<\tilde g(\om,t, \cdot), \phi> : = <g(\om, t,
\cdot),\phi|_{D_T}>,\quad \phi \in B^{k-1}_q({\mathbb R}^{n})
\end{align}
and, if $ k \geq1$, we define $\tilde g(\om,t, \cdot) \in
{B}^{k-1}_p({\mathbb R}^{n+1})$ by  $\tilde g(\om, t,x)= g(\om,
t,x)$ for $x \in D$ and $\tilde g(\om,t,x) =0$ for $x \in \Rb
\setminus \bar D$. Then we get $\|\tilde g\|_{\mathbb
B_p^{k-1}(\mathbb R^{n+1})} \thickapprox \|g\|_{\tilde{\mathbb
B}_p^{k-1}(D_T)}$.

\begin{lemm}\label{prop3}
Let $k > 0$ and $g \in {\mathbb B}^{k -1}_{p,o}(D_T)$. Define
\begin{align}
v_3(t,x) := \left\{\begin{array}{ll} \int_0^t< \tilde g(s, \cdot),
\Ga(t-s, x-\cdot)> dw_s, \quad \textrm{if}\;\;\; 0 < k < 1,\\
\label{v3}\vspace{0.1cm}
\\
\int_0^t \int_{\Rb} \Ga(t-s, x-y) \tilde g(s,y) dy\;dw_s,\quad
\textrm{if}\;\;\; 1 \leq k
\end{array}
\right.
\end{align}
;we suppressed $\omega$. Then  $v_3 \in \mathbb B^{k }_p(\mathbb
R^n_T)$ with
\begin{align}\label{estimate 3}
\| v_3\|_{\mathbb B^{k }_p(\mathbb R^n_T)} \leq c \| g\|_{{\mathbb
B}^{k -1}_{p,o}(D_T)}.
\end{align}
\end{lemm}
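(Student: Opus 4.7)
The plan is to reduce Lemma~\ref{prop3} to a corresponding full-space statement, prove the ``gain of one derivative'' on the Bessel-potential scale at integer smoothness orders via Krylov's stochastic $L^p$-theory, and then pass to the Besov scale by real interpolation. The piecewise definition of $\tilde g$ given just before the statement is designed precisely so that $\|\tilde g\|_{\mathbb B^{k-1}_p(\Rb_T)} \thickapprox \|g\|_{\mathbb B^{k-1}_{p,o}(D_T)}$, with the two cases $0<k<1$ and $k\geq 1$ handled separately because for $k-1<0$ one must extend by duality while for $k-1\geq 0$ one extends by zero. Hence it suffices to show that the full-space stochastic convolution operator
\begin{align*}
\mathcal S H(t,x) := \int_0^t \langle H(s,\cdot),\, \Ga(t-s, x-\cdot)\rangle\, dw_s
\end{align*}
is bounded from $\mathbb B^{k-1}_p(\Rb_T)$ into $\mathbb B^k_p(\Rb_T)$ for every $k > 0$, with the pairing interpreted as the usual integral $\int\Ga(t-s,x-y)H(s,y)\,dy$ when $k\geq 1$.

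The core step is the Bessel-potential endpoint estimate: for each nonnegative integer $m$,
\begin{align*}
\|\mathcal S H\|_{\mathbb H^m_p(\Rb_T)} \leq c\,\|H\|_{\mathbb H^{m-1}_p(\Rb_T)}.
\end{align*}
Since $(1-\De)^{s/2}$ commutes with the heat semigroup $T_t f := \Ga(t,\cdot)*f$, this reduces to the case $m=0$ applied to $(1-\De)^{(m-1)/2}H$. For $m=0$ I apply the Burkholder--Davis--Gundy inequality, which (for $p\geq 2$) yields the pointwise bound
\begin{align*}
E|\mathcal S H(t,x)|^p \leq c_p\, E\Big(\int_0^t |T_{t-s} H(s,\cdot)(x)|^2\,ds\Big)^{p/2},
\end{align*}
and then I control the resulting square function in $L^p(dx\,dt)$ using parabolic Littlewood--Paley together with the one-derivative smoothing of $\Ga$, as in Krylov's argument in \cite{K1}.

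Finally, writing $k = k_0 + \te$ with $k_0 := \lfloor k \rfloor$ and $\te \in [0,1)$, I combine Remark~\ref{rem2}(3), extended from $D$ to $\Rb$, with Lemma~\ref{interpolationlemma} to obtain
\begin{align*}
\bigl(\mathbb H^{m-1}_p(\Rb_T),\, \mathbb H^m_p(\Rb_T)\bigr)_{\te,p} = \mathbb B^{m-1+\te}_p(\Rb_T),\qquad m \in \{k_0,\, k_0+1\},
\end{align*}
and real interpolation applied to the bounded operator $\mathcal S$ between the corresponding endpoint pairs yields the desired Besov bound. The main obstacle is the Krylov endpoint estimate of the previous paragraph: the BDG bound produces a stochastic quadratic-variation expression whose $L^2$-in-time and $L^p$-in-space norms do not factor directly, and the restriction $p\geq 2$ enters precisely here; once that estimate is in hand the interpolation step is purely formal.
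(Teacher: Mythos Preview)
Your proposal is correct and follows essentially the same route as the paper: the paper's own proof is the one-line ``Apply the result in \cite{K1} and Lemma~\ref{interpolationlemma}'', and you have simply unpacked this --- Krylov's Bessel-potential estimate $\|\mathcal S H\|_{\mathbb H^m_p(\Rb_T)}\le c\|H\|_{\mathbb H^{m-1}_p(\Rb_T)}$ (reduced to $m=0$ via commutation with $(1-\De)^{s/2}$ and then handled by BDG plus the parabolic Littlewood--Paley inequality, which is exactly the content of \cite{K1,Kr94}) followed by real interpolation through Lemma~\ref{interpolationlemma}. One cosmetic remark: your endpoint pair $(\mathbb H^{k_0-1}_p,\mathbb H^{k_0}_p)$, $(\mathbb H^{k_0}_p,\mathbb H^{k_0+1}_p)$ requires $\te\in(0,1)$, so for integer $k$ simply shift the window (e.g.\ interpolate between $\mathbb H^{k-1/2}_p$ and $\mathbb H^{k+1/2}_p$), which your commutation argument already covers since it gives the Bessel estimate at every real order.
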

\begin{proof}
Apply the result in  \cite{K1} and Lemma \ref{interpolationlemma}.
\end{proof}

For $\ep\in (0,1)$ we let $p_0 = \frac12 + \frac12\ep, \,\, p_0^{'} = \frac12 -\frac12\ep$.
We say that $(\frac1p, k) \in {\mathcal R}_\ep$ if  $\al$ and $p$ are numbers satisfying one of the followings:
\begin{itemize}
\item[1.]
$ p_0 <p < p_0^{'}$\; if\; $ 0 < k <1$,
\item[2.]
$1 < p \leq p_0$ \;if\; $\frac2p - 1-\ep < k <1$,
\item[3.]
$p_0^{'} \leq p < \infty$ \;if\; $0 <k <\frac2p +\ep  $.
\end{itemize}


\begin{lemm}\label{prop4}
There is a positive constant $\ep\in (0,1) $ depending only on Lipschitz constant of $\partial D$ such that if
$(\frac1p, k) \in {\mathcal R}_\ep$, then for all
$b' \in {\mathbb B}^{k,\frac12k}_{p}(\pa D_T)$ with
$b'(\omega,0,x) =0$ for $\omega\in\Omega, x \in \pa D$ if $k > \frac2p$.
Then there is a unique solution $h \in {\mathbb B}^{k +\frac1p,\frac12k+ \frac1{2p}}_{p}(D_T)$ of the problem
 $($\ref{heat}$)$  in $\Omega\times D_T$ with boundary value $b'$ in place of $b-v$  and $h(\omega,0,x) =0$ for
 $\omega\in\Omega,\;x \in D$ and it satisfies
\begin{align}\label{estimate 4}
\| h\|_{{\mathbb B}^{k +\frac1p,\frac12k +\frac1{2p}}_{p } (D_T)} \leq c
\| b'\|_{{\mathbb B}^{k,\frac12k }_p (\pa D_T)}.
\end{align}
If $D$ is a $C^1$-domain, then  we can take $\ep =1$.
\end{lemm}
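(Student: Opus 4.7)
The plan is to reduce the problem to the purely deterministic heat initial-boundary value problem on the Lipschitz cylinder $D_T$ pointwise in $\omega$, apply the sharp anisotropic Besov theory for that deterministic problem, and then lift the pointwise estimate to the stochastic norm by integration. Since $b' \in \mathbb B^{k,\frac12 k}_p(\pa D_T) = L^p(\Omega;B^{k,\frac12 k}_p(\pa D_T))$, Fubini yields $b'(\omega,\cdot,\cdot) \in B^{k,\frac12 k}_p(\pa D_T)$ for $P$-almost every $\omega$. For each such $\omega$, the problem in question is, once the stochastic decoration is stripped away, the classical non-homogeneous Dirichlet problem for the deterministic heat equation on $D_T$ with boundary datum $b'(\omega,\cdot,\cdot)$ and zero initial data, and the compatibility $b'(\omega,0,x)=0$ on $\pa D$ when $k>\frac{2}{p}$ is exactly what the deterministic theory requires.

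\medskip

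For this deterministic step I will invoke the anisotropic Besov regularity theorem for the heat equation on a Lipschitz cylinder from \cite{JM}. That result furnishes an $\epsilon\in (0,1)$ depending only on the Lipschitz character of $\pa D$ (and equal to $1$ in the $C^1$ case) such that, for $(1/p,k)\in\mathcal R_\epsilon$, there is a unique solution $h(\omega,\cdot,\cdot)\in B^{k+\frac1p,\frac12 k+\frac1{2p}}_p(D_T)$ with
\[ \| h(\omega,\cdot,\cdot) \|_{B^{k+\frac1p,\frac12 k+\frac1{2p}}_p(D_T)} \le c\, \| b'(\omega,\cdot,\cdot)\|_{B^{k,\frac12 k}_p(\pa D_T)} \]
and with the constant $c$ independent of $\omega$. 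This is the only non-routine input of the proof.

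\medskip

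To lift the pointwise bound to the required stochastic estimate \eqref{estimate 4}, raise both sides to the $p$-th power and integrate against $P$. To obtain the predictability of $h$, observe that the deterministic solution operator $\mathcal T\colon B^{k,\frac12 k}_p(\pa D_T)\to B^{k+\frac1p,\frac12 k+\frac1{2p}}_p(D_T)$ is bounded and linear; approximating $b'$ in the $\mathbb B$-norm by $\mathcal P$-simple step functions in $\omega$ taking values in $B^{k,\frac12 k}_p(\pa D_T)$ and pushing them through $\mathcal T$ transfers the $\mathcal P$-measurability from $b'$ to $h$, and the resulting image lies in $L^p(\Omega\times(0,T),\mathcal P; B^{\ldots}_p(D))$. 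Uniqueness in the stochastic class reduces to applying deterministic uniqueness $P$-almost surely to the difference of any two candidate solutions.

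\medskip

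The principal obstacle is not in this lifting procedure \textemdash{} which is essentially Fubini plus a linear-operator density argument \textemdash{} but in the deterministic ingredient itself: the gain of $\frac{1}{p}$ in spatial regularity when solving the Dirichlet problem on a Lipschitz cylinder is sharp and forces the range restriction $(1/p, k)\in\mathcal R_\epsilon$, in the spirit of the Dahlberg/Jerison--Kenig $L^p$ solvability theory for elliptic problems on Lipschitz domains; smoothness of $\pa D$ up to $C^1$ is exactly what relaxes this range to $\epsilon=1$.
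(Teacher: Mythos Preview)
Your approach is correct and essentially identical to the paper's: the paper's proof consists of the single line ``Apply \cite{B}, \cite{B2} and \cite{JM} for each $\omega\in\Omega$,'' which is exactly your $\omega$-wise reduction to the deterministic Dirichlet problem on the Lipschitz cylinder followed by integration in $\Omega$. Your additional remarks on measurability/predictability via the bounded linear solution operator are reasonable elaborations that the paper simply omits.
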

\begin{proof}
Apply \cite{B}, \cite{B2} and \cite{JM} for each $\omega\in\Omega$.
\end{proof}

We need the following restriction theorem from \cite{C}:
\begin{lemm}\label{prop5}
Let $\frac1p < k < 1 + \frac1p$. Then for any $h=h(t,x) \in
{B}^{k,\frac12k}_p ( \Rb_T )$, we have $h|_{\pa D_T } \in {B}^{k
-\frac1p,\frac12k -\frac1{2p}}_p (\pa D_T )$.
\end{lemm}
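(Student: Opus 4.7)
\medskip

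\noindent\textbf{Proof proposal.}
The plan is to establish this as an anisotropic parabolic trace theorem on a Lipschitz cylinder by localization, flattening, and a careful exchange between spatial and temporal scales dictated by the parabolic homogeneity $|x|^2 \sim |t|$. First I would cover $\pa D$ by finitely many coordinate charts on which $\pa D$ is the graph of a Lipschitz function, introduce a smooth partition of unity $\{\chi_j\}$ in $x$ subordinate to that cover, and observe that multiplication by $\chi_j$ and bi-Lipschitz change of spatial coordinates (time is untouched) preserve the scales ${B}^{k,\frac12 k}_p$ and ${B}^{k-\frac1p,\frac12(k-\frac1p)}_p$ up to equivalence of norms. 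The hypothesis $k<1+\frac1p$ (so that the boundary index $k-\frac1p<1$) is precisely what makes the Lipschitz change of variables admissible. This reduces the problem to the model case: for $h\in B^{k,\frac12k}_p(\Rb\times(0,T))$ with compact support in $x$, show
\begin{equation*}
\bigl\|h|_{x_n=0}\bigr\|_{B^{k-\frac1p,\frac12(k-\frac1p)}_p(\Rn\times(0,T))}\;\lesssim\;\|h\|_{B^{k,\frac12k}_p(\Rb\times(0,T))}.
\end{equation*}

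Next I would split the target norm into its two defining pieces, namely $L^p_t B^{k-\frac1p}_{p,x'}$ and $L^p_{x'}B^{\frac12(k-\frac1p)}_{p,t}$. The first piece is immediate from the classical Lipschitz-domain trace theorem of Jerison--Kenig (cited in the paper) applied slice-by-slice in $t$: since $\frac1p<k<1+\frac1p$ we have $\|h(t,\cdot)|_{x_n=0}\|_{B^{k-\frac1p}_p(\Rn)}\lesssim\|h(t,\cdot)\|_{B^k_p(\Rb)}$ pointwise in $t$, and integrating in $t$ absorbs this into $\|h\|_{L^p_tB^k_{p,x}}\le\|h\|_{B^{k,\frac12k}_p}$.

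The main obstacle, and the heart of the proof, is the second piece. Using the Gagliardo-type difference characterization for the range $0<\frac12(k-\frac1p)<1$, one must control
\begin{equation*}
\int_{\Rn}\!\!\int_0^T\!\!\int_0^T \frac{|h(t,x',0)-h(s,x',0)|^p}{|t-s|^{\frac12+\frac{p k}{2}}}\,dt\,ds\,dx',
\end{equation*}
whose denominator has exponent $\frac12+\frac{pk}{2}$ — that is, a factor $|t-s|^{-\frac12}$ beyond what the pure $L^p_{x'}B^{\frac{k}{2}}_{p,t}$ norm provides. The plan is to spend exactly this factor by replacing the surface value $h(\cdot,x',0)$ with an average over a parabolic tube of thickness $|t-s|^{\frac12}$: for each fixed pair $(t,s)$, apply the slice trace inequality to $g=h(t,\cdot)-h(s,\cdot)$, then interpolate it against the volume integral
\begin{equation*}
\int_0^{|t-s|^{\frac12}}\!|h(t,x',x_n)-h(s,x',x_n)|^p\,dx_n,
\end{equation*}
so that the $|t-s|^{\frac12}$ gained from the tube width exactly balances the missing power. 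A Fubini rearrangement then recasts the result as a bound by the $L^p_xB^{\frac{k}{2}}_{p,t}$ Gagliardo integral of $h$, together with a lower-order contribution bounded by the first piece already handled.

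\textbf{The hard part} is therefore the parabolic matching step: one must trade the loss of $\frac1p$ in spatial regularity at the boundary for a loss of $\frac1{2p}$ in time, and this is delicate precisely because the ambient domain is only Lipschitz, so one cannot freely pass derivatives through the boundary. Once this exchange is set up correctly, the remainder of the argument consists of Fubini and the slice-wise trace theorem, with the constants depending only on $n,p,k,T$ and the Lipschitz character of $\pa D$, as claimed.
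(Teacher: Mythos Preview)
The paper does not actually prove this lemma: it is simply quoted as the restriction theorem from \cite{C}, with no argument given. So there is no ``paper's own proof'' to compare against, and your sketch stands as an independent direct argument.

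Your strategy --- localize and flatten (legitimate because $k-\tfrac1p<1$), split the boundary norm into its $L^p_tB^{k-1/p}_{p,x'}$ and $L^p_{x'}B^{\frac12(k-1/p)}_{p,t}$ pieces, dispatch the first by the slice-wise elliptic trace theorem, and handle the second by averaging over a normal tube of parabolic thickness $\delta=|t-s|^{1/2}$ --- is exactly the standard direct route and is correct in outline. The main term from the tube average does convert, via Fubini, into the $L^p_xB^{k/2}_{p,t}$ Gagliardo integral with the powers matching perfectly, as you say.

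The only place your write-up is too quick is the ``lower-order contribution''. After the averaging and Fubini, that remainder becomes
\[
\int_0^T\!\int_{\Rn}\!\int_0^{\sqrt{T}}\frac{|h(t,x',x_n)-h(t,x',0)|^p}{x_n^{kp}}\,dx_n\,dx'\,dt,
\]
and controlling this by $\|h\|_{L^p_tB^k_{p,x}}^p$ is precisely a fractional Hardy inequality for $B^k_p$-functions with vanishing trace on $\{x_n=0\}$. That inequality does hold on the range $\tfrac1p<k<1+\tfrac1p$ (the vanishing of $h(t,x',\cdot)-h(t,x',0)$ at $x_n=0$ is what makes the weight $x_n^{-kp}$ with $kp>1$ admissible), so your claim that the remainder is ``bounded by the first piece already handled'' is true --- but it is the genuine technical crux of the temporal estimate and deserves a line of justification rather than a clause. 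With that step made explicit, your argument is a complete proof of what the paper merely cites.
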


The following lemma for the stochastic part $v_3$ in
(\ref{solution}) is important and we elaborate the proof in
Section \ref{20101216-1} and \ref{20101216-2}.
\begin{lemm}\label{maintheo}
Assume \;$2 \leq p < \infty$.
\begin{itemize}
\item[(1)]
Let $ \frac1p <k <1$ and  $g \in {\mathbb B}_{p,o}^{k-1}(D_T)$. Then
$v_3$ defined for such $k$ in Lemma \ref{prop3} belongs to ${\mathbb
B}^{k ,\frac12k   }_p(\Rb_T)$ and
\begin{align}\label{equality2}
\| v_3\|_{{\mathbb B}^{k ,\frac12k   }_p(\Rb_T)} \leq c \| g
\|_{{\mathbb B}^{k-1}_{p,o} (D_T)}.
\end{align}

\item[(2)] Let $1\le k < 1+ \frac1p$ and $g \in {\mathbb
B}^{k-1}_{p,o} (D_T) $. Then $v_3$ defined for such $k$  in Lemma
\ref{prop3} satisfies
\begin{align}\label{equality3}
\| v_3|_{\pa D_T}\|_{{\mathbb B}^{k - \frac1p,\frac12k
-\frac1{2p}}_p(\pa D_T)} \leq c \| g\|_{{\mathbb B}^{k-1}_{p,o}
(D_T)}
\end{align}
\end{itemize}
\end{lemm}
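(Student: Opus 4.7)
The plan is to reduce both parts to the Burkholder-Davis-Gundy (BDG) inequality in a time-Slobodeckij framework. For $0\le s<t$, the stochastic-integral representation of $v_3$ yields the martingale decomposition
\begin{align*}
v_3(t,x)-v_3(s,x) &= \int_0^s\bigl[\Ga(t-r,\cdot)-\Ga(s-r,\cdot)\bigr]*\tilde g(r,\cdot)(x)\,dw_r \\
&\quad + \int_s^t \Ga(t-r,\cdot)*\tilde g(r,\cdot)(x)\,dw_r,
\end{align*}
and BDG bounds $E|v_3(t,x)-v_3(s,x)|^p$ by the $p/2$-th moment of the sum of the two corresponding $dr$-integrated squared quantities. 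Since $k/2<1$ in part (1) and $k/2-1/(2p)<1/2$ in part (2), the relevant Besov spaces in time admit the Slobodeckij-seminorm characterization of Remark \ref{rem2}(2), so estimating space-time weighted increments of $v_3$ will suffice.

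For part (1), Lemma \ref{prop3} already supplies $\|v_3\|_{L^p(\Om\times(0,T);B^k_p(\Rb))}\lesssim\|g\|_{{\mathbb B}^{k-1}_{p,o}(D_T)}$, so by the definition (\ref{anisotropic norm}) of the anisotropic norm only the time-Slobodeckij piece remains:
$$E\int_{\Rb}\int_0^T\int_0^T\frac{|v_3(t,x)-v_3(s,x)|^p}{|t-s|^{1+pk/2}}\,ds\,dt\,dx\lesssim\|g\|^p_{{\mathbb B}^{k-1}_{p,o}(D_T)}.$$
After inserting the BDG bound and applying Fubini/Minkowski, the problem reduces to controlling the $L^p_x$-norm of $[\Ga(t-r)-\Ga(s-r)]*h$ and $\Ga(t-r)*h$ in terms of $\|h\|_{B^{k-1}_p(\Rb)}$, with the kernel difference contributing a factor $|t-s|^{\ga}$ and the heat smoothing a factor $(t-r)^{-\de}$; parabolic scaling then selects $\ga,\de$ so that the Slobodeckij denominator $|t-s|^{1+pk/2}$ is exactly absorbed.

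For part (2), the spatial component $L^p(\Om\times(0,T);B^{k-1/p}_p(\pa D))$ of the boundary norm is obtained by restricting $v_3$ from Lemma \ref{prop3} to $\pa D$ $\om$-wise, using the classical spatial trace theorem $B^k_p(\Rb)\to B^{k-1/p}_p(\pa D)$, which is valid since $k>1/p$ and $\pa D$ is Lipschitz. The time component reduces to
$$E\int_{\pa D}\int_0^T\int_0^T\frac{|v_3(t,z)-v_3(s,z)|^p}{|t-s|^{1+p(k/2-1/(2p))}}\,ds\,dt\,d\si(z),$$
handled by the same BDG plus heat-kernel scheme with pointwise kernel evaluations at $z\in\pa D$; the weaker Slobodeckij weight accounts for the $1/p$-loss in the temporal exponent created by trading integration over $\Rb$ for integration over the $(n-1)$-dimensional boundary, and it is precisely this loss that makes the stochastic trace meaningful even though one cannot afford the full $k/2$ time-regularity in the interior when $k\ge 1$.

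The main obstacle is the delicate heat-kernel estimate needed to pair the time-difference of $\Ga$ with the negative-order Besov norm $B^{k-1}_{p,o}(D)$ of $g$, extracting precisely the powers of $|t-s|$ and $(t-r)$ dictated by parabolic scaling. The analysis is further complicated by two facts: the extension $\tilde g$ is defined via duality when $k<1$ but as the zero extension when $k\ge 1$, and the domain $D$ is only Lipschitz, so Fourier-multiplier methods on $\Rb$ must be transferred to $D$ with care. Carrying out these estimates rigorously, with the correct case distinctions for the extension and exponent, is the substance of Sections \ref{20101216-1} and \ref{20101216-2}.
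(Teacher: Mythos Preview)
Your high-level scheme --- BDG applied to the two-piece martingale decomposition of $v_3(t,x)-v_3(s,x)$, followed by heat-kernel bounds to absorb the Slobodeckij denominator --- is exactly the paper's strategy. However, two specific technical devices that carry the actual weight of the argument are absent from your outline.

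For part (1), you propose to control $\|\Ga(t-r,\cdot)*h\|_{L^p_x}$ and $\|[\Ga(t-r,\cdot)-\Ga(s-r,\cdot)]*h\|_{L^p_x}$ directly by $\|h\|_{B^{k-1}_p(\Rb)}$. The paper does \emph{not} do this. Instead it proves the dyadic-shell estimate (Lemma~\ref{20110324-3}) with $\|h\|_{H^{\al-1}_p(\Rb)}$ on the right for variable $\al$, obtaining operator norms $\lesssim 2^{i(\al-k)}$ on the shell $4^i\le|t-s|<4^{i+1}$; it then invokes Peetre's summation/interpolation theorem (Lemma~\ref{lemm4}) and a second real-interpolation step to pass from the Sobolev scale $H^{\al-1}_p$ to the Besov space $B^{k-1}_p$. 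This dyadic-plus-interpolation machinery is not a detail one can wave away: a direct bound of the heat semigroup difference by a negative-order \emph{Besov} norm with the precise power of $|t-s|$ you need is not available in the paper's toolkit, and you have not indicated an alternative source for it.

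For part (2), the phrase ``pointwise kernel evaluations at $z\in\pa D$'' hides the essential mechanism. The paper computes $\int_{\pa D}\Ga(t-r,x-y)\,d\si(x)\lesssim (t-r)^{-1/2}e^{-c\,\de(y)^2/(t-r)}$ where $\de(y)=\mathrm{dist}(y,\pa D)$, and then converts the resulting weight $\de(y)^{-p(k-1)}$ back into $\|g(r,\cdot)\|_{H^{k-1}_{p,o}(D)}^p$ via the Hardy-type inequality of Lemma~\ref{lemma-complex}. This distance-to-boundary step is precisely where the hypothesis $g\in H^{k-1}_{p,o}(D)$ (zero-extension, $k\ge 1$) is used, and it is what replaces the Fourier-multiplier argument of part (1), which is unavailable once one restricts to the $(n-1)$-dimensional set $\pa D$. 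Your remark about transferring multiplier methods ``with care'' does not capture this; without the Hardy inequality the boundary time-Slobodeckij integral does not close.
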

By Lemma \ref{prop1} - Lemma \ref{maintheo} the proof of Theorem
\ref{theo} follows.

{\bf Proof of Theorem \ref{theo}}\quad Recall the derivation of the
solution $u=v+h$ in Section \ref{introduction}.

(1) By Lemma \ref{prop1}, \ref{prop2} and Lemma \ref{maintheo} (1),
the (random) function $v:=v_1+v_2+v_3$ is in ${\mathbb B}^{k,\frac12
k }_{p}(\Rb_T)$; note that the definition of $u_1$ in Lemma
\ref{prop1} is different by the cases $k\in
(\frac{1}{p},\frac{2}{p})$ and $k\in [\frac{2}{p},1)$. Moreover, we
choose the definition of $u_3$ in Lemma \ref{prop3} for $k\in
(\frac{1}{p},1)$. Now, using Lemma \ref{prop5} for each
$\omega\in\Omega$, we have $b' := b - v|_{\pa D_T} \in {\mathbb
B}^{k-\frac1p,\frac12 k -\frac1{2p}}_{p}(\pa D_T)$. Let $u_4 \in
{\mathbb B}^{k,\frac12 k }_p(  D_T)$ be the unique solution of the
problem (\ref{heat}) which does exist by Lemma \ref{prop4}. Then
$u:=v+h$ is a solution of (\ref{main equation}) and the estimate
(\ref{equality}) follows (\ref{estimate 1}), (\ref{estimate 2}),
(\ref{equality2}) and (\ref{estimate 4}). The uniqueness of such $u$
follows the theory of deterministic heat equation.

(2)  Set $v$ as in (1) by choosing the appropriate definitions of
$v_1$, $v_3$ when $k\in [1,1+\frac1p)$.  Then proof is similar to
the case (1). However, this time we can not have $v_3$ in
${\mathbb B}^{k ,\frac12 k   }_p(\Rb_T)$ although it is in
$\mathbb B^{k }_p(\mathbb R^n_T)$ by the Lemma \ref{prop3}. Hence,
we have $v$ is in $\mathbb B^{k }_p(\mathbb R^n_T)$ as $v_1$,
$v_2$ are trivially in $\mathbb B^{k }_p(\mathbb R^n_T)$ (see
(\ref{besov})). Nevertheless, by using Lemma \ref{maintheo} (2) we
still have $b'\in {\mathbb B}^{k-\frac1p,\frac12 k
-\frac1{2p}}_{p}(\pa D_T)$.  By choosing $v_4$ as before  in
${\mathbb B}^{k,\frac12 k }_p( D_T)$ and hence $\mathbb B^{k
}_p(\mathbb R^n_T)$, we have a solution of (\ref{main equation})
in $\mathbb B^{k }_p(\mathbb R^n_T)$ and the estimate
(\ref{Mainequ}) follows (\ref{estimate 1}), (\ref{estimate 2}),
(\ref{equality3}), (\ref{estimate 4}) with (\ref{besov}). The
solution is unique.  \hspace{4.5cm}$\Box$

\mysection{Proof of Lemma \ref{prop1}}\label{20110324-1}
 We believe that
one may find a proof of Lemma \ref{prop1} is in the literature.
However, we can not find the exact reference and, hence, we
provide our own proof. We start with a lemma for multipliers.
\begin{lemm}\label{multiplier2}
Let $\Phi (\xi) = \hat \phi(2^{-1} \xi) + \hat \phi (\xi) + \hat
\phi (2\xi) $ with $\phi$ in the definition of Besov spaces, $\Phi_j
(\xi) = \Phi(2^{-j} \xi)$, and $\rho_{tj}(\xi) = \Phi_j ( \xi)
e^{-t|\xi|^2}$ for each integer $j$. Then $\rho_{tj}( \xi)$ is a
$L^p(\mathbb{R}^n)$-multiplier with the finite norm $M(t,j)$ for $1
< p <\infty$. Moreover for $t
> 0$
\begin{align}\label{multiplier2_2}
M(t,j) & \lesssim e^{-\frac14 t2^{2 j}}\sum_{0 \leq i \leq n} t^i
2^{2 ij} \lesssim e^{-\frac18 t2^{2 j}}.
\end{align}
\end{lemm}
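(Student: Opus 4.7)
My plan is to bound $M(t,j)$ by the $L^1$-norm of the inverse Fourier transform $\check{\rho_{tj}}$ via Young's convolution inequality, and then estimate that $L^1$-norm explicitly. Since $\Phi$ is smooth and compactly supported, $\rho_{tj}$ is Schwartz and so is $\check{\rho_{tj}}$, so this reduction is well-posed.

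First I would remove the scale $2^j$. Substituting $\eta=2^{-j}\xi$ and writing $s:=t\,2^{2j}$, then $y=2^{j}x$, a direct change of variable gives $\|\check{\rho_{tj}}\|_{L^1(\mathbb R^n)}=\|K_s\|_{L^1(\mathbb R^n)}$, where
\begin{eqnarray*}
K_s(y):=\int_{\mathbb R^n}\Phi(\eta)\,e^{-s|\eta|^2}\,e^{i\eta\cdot y}\,d\eta
\end{eqnarray*}
and $\Phi$ is supported in the fixed annulus $\{1/4\leq|\eta|\leq 4\}$. The remaining task is a purely $s$-dependent estimate. Integrating by parts in $\eta$ gives $(iy)^\alpha K_s(y)=\int\partial_\eta^\alpha[\Phi(\eta)e^{-s|\eta|^2}]e^{i\eta\cdot y}\,d\eta$; using Leibniz, the fact that $\partial^\gamma e^{-s|\eta|^2}$ is a polynomial of degree at most $|\gamma|$ in $s$ (with bounded coefficients on $\mathrm{supp}\,\Phi$) times $e^{-s|\eta|^2}$, and that $|\eta|$ is bounded below on $\mathrm{supp}\,\Phi$ so that $e^{-s|\eta|^2}\leq e^{-cs}$ for some $c>0$, one obtains, after applying the estimate with $|\alpha|=0$ and $|\alpha|=n+1$ and combining, the pointwise bound
\begin{eqnarray*}
|K_s(y)|\lesssim \frac{e^{-cs}}{(1+|y|)^{n+1}}\sum_{0\leq i\leq n+1} s^{i}.
\end{eqnarray*}
Integrating in $y$ yields $\|K_s\|_{L^1}\lesssim e^{-cs}\sum_{i} s^{i}$, which has the form of the first inequality in (\ref{multiplier2_2}); the second follows by absorbing the polynomial factor into a slightly smaller exponential.

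The main obstacle is matching the precise constants in (\ref{multiplier2_2}): the naive support lower bound $|\eta|^2\geq 1/16$ yields exponent $1/16$ instead of the stated $1/4$, and integration by parts $(n+1)$ times produces $n+1$ powers of $s$ instead of $n$. Both can be fine-tuned, for instance by splitting $e^{-s|\eta|^2}=e^{-s|\eta|^2/2}\cdot e^{-s|\eta|^2/2}$ to separate pointwise decay from the smoothness used in the integration by parts, or by replacing the $(iy)^\alpha$ approach with the Plancherel-based estimate $\|K_s\|_{L^1}\lesssim\|(1-\Delta_\eta)^{M}(\Phi\,e^{-s|\cdot|^2})\|_{L^2(\mathbb R^n)}$ for any $M>n/2$. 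These refinements do not affect the essential consequence, since the polynomial in $s$ is absorbed into the exponential in the final bound, and it is only this final bound $\lesssim e^{-\frac18 t 2^{2j}}$ that is used later in the paper.
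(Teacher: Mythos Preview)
Your argument is correct and complete in substance; it simply takes a different route than the paper. The paper first rescales exactly as you do, observing that the multiplier norm of $\rho_{tj}$ equals that of $\rho'_{tj}(\xi):=\Phi(\xi)e^{-t2^{2j}|\xi|^2}$, but then, instead of passing to the kernel, applies the Marcinkiewicz multiplier theorem (Theorem~4.6' in Stein) directly to $\rho'_{tj}$: since $\Phi$ is supported in $\{1/4<|\xi|<4\}$, the derivatives $D^\beta_\xi\rho'_{tj}$ with $\beta\in\{0,1\}^n$ are bounded by $e^{-c\,t2^{2j}}\sum_{0\le i\le |\beta|}(t2^{2j})^i$, and the required dyadic integrals are trivially finite. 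This explains why the paper's polynomial has degree $n$ rather than $n+1$: Marcinkiewicz needs only one derivative in each coordinate, whereas your $(1+|y|)^{-(n+1)}$ decay requires $n+1$ integrations by parts. Your $L^1$-kernel approach via Young's inequality is more elementary (no black-box multiplier theorem) and yields the same usable conclusion; the paper's approach is slightly sharper on the intermediate polynomial factor but, as you note, that factor is absorbed into the exponential anyway. Incidentally, the exponent discrepancy you flag ($1/16$ versus the stated $1/4$) is present in the paper's own computation as well, and is immaterial for the application.
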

\begin{proof}
The $L^p(\mathbb{R}^n)$-multiplier norm $M(t,j)$ of $\rho_{tj}(\xi)
$ is equal to the $L^p(\mathbb{R}^n)$-multiplier norm of
$\rho_{tj}^{'}(\xi)  := \Phi(\xi) e^{-t2^{2 j} |\xi|^2}$ (see
Theorem 6.1.3 in \cite{BL}). Now, we make use of the Theorem 4.6\'{}
of \cite{St}. We  assume $\be_1,  \be_2, \cdots, \be_l =1$ and
$\be_i =0$ for $l+1 \leq i \leq n$, and  set $\be=(\be_1, \be_2,
\cdots, \be_n)$. Since $supp\;(\Phi) \subset \{\xi \in \mathbb{R}^n
\, | \, \frac14 < |\xi| < 4 \}$, we have
\begin{align*}
|D^\be_{\xi} \rho_{tj}^{'}(\xi)| &\lesssim \sum_{0 \leq i \leq
|\be|} t^i 2^{2 ij} e^{-\frac14t 2^{2 j}} \chi_{\frac14 < |\xi| < 4}
(\xi),
\end{align*}
where $\chi_A$ is the characteristic function on a set $A$. Hence,
for $A = \prod_{1 \leq i \leq l} [2^{k_i}, 2^{k_i +1}]$ we receive
\begin{align*}
\int_A \left|\frac{\pa^{|\be|} }{\pa \xi_{\be}}
\rho^{'}_{tj}(\xi)\right| d\xi_\be \leq c \sum_{0 \leq i \leq n} t^i
2^{2 ij} e^{-\frac14t 2^{2 j}}.
\end{align*}
\end{proof}

Below $\tilde u_0 $ is the extension of $u_0$; note
$\tilde{u}_0(\omega,\cdot)\in B^{k-\frac2p}_p(\mathbb{R}^n)$ for
each $\omega\in \Omega$. The following lemma handles the case
$k=0$.
\begin{lemm}\label{frac3p}
We have
\begin{align}\label{boundary2}
\|v_1(\omega)\|_{L^p(\mathbb{R}^n_T)} \leq c \;\|
\tilde{u}_0(\omega)\|_{{B}^{-\frac{2}p}_p (\mathbb{R}^n)},\quad
\omega\in \Omega,
\end{align}
where the constant $c$ is independent of $u_0$ and $\omega$.
\end{lemm}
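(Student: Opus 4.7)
My approach is to apply the Littlewood--Paley decomposition from (\ref{psi1}), writing $\tilde u_0 = \psi * \tilde u_0 + \sum_{j\ge 1} \phi_j * \tilde u_0$, and analyze the heat convolution $v_1(t,x) = (\Ga(t,\cdot)* \tilde u_0)(x)$ piece by piece. Since $\widehat{\phi_j * \tilde u_0}$ is Fourier-supported where $\Phi_j \equiv 1$, Lemma \ref{multiplier2} produces the key bound $\|\Ga(t,\cdot)* \phi_j * \tilde u_0\|_{L^p(\mathbb{R}^n)} \lesssim e^{-ct\,2^{2j}} a_j$, with $a_j := \|\phi_j * \tilde u_0\|_{L^p}$. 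The heat semigroup being an $L^p$-contraction, the low-frequency piece contributes at most $T^{1/p}\|\psi * \tilde u_0\|_{L^p}$. Minkowski's inequality then reduces the task to establishing
\begin{equation*}
\int_0^T \Big(\sum_{j\ge 1} e^{-ct\, 2^{2j}} a_j \Big)^p dt \lesssim \sum_{j \ge 1} 2^{-2j} a_j^p,
\end{equation*}
whose right-hand side is precisely the high-frequency contribution to $\|\tilde u_0\|_{B^{-2/p}_p}^p$.

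To establish this estimate I would partition $(0,T)$ into dyadic intervals $I_k = [2^{-2k},\,2^{-2(k-1)}) \cap (0,T)$. On $I_k$ one has $t\,2^{2j} \gtrsim 1$ for $j \ge k$ and $t\,2^{2j} \lesssim 1$ for $j < k$, so the sum splits, uniformly in $t \in I_k$, as $\sum_{j<k} a_j + \sum_{j \ge k} e^{-c' 2^{2(j-k)}} a_j$. Because $|I_k| \approx 2^{-2k}$, using $(a+b)^p \lesssim a^p + b^p$ reduces the problem to
\begin{equation*}
\sum_k 2^{-2k}\Big(\sum_{1 \le j<k}a_j\Big)^p + \sum_k 2^{-2k}\Big(\sum_{j\ge k} e^{-c' 2^{2(j-k)}} a_j\Big)^p \lesssim \sum_{j\ge 1} 2^{-2j} a_j^p.
\end{equation*}

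The high-frequency piece is straightforward: Hölder against the super-exponentially summable weight $e^{-c' 2^{2(j-k)}}$ gives $(\sum_{j\ge k} e^{-c' 2^{2(j-k)}} a_j)^p \lesssim \sum_{j\ge k} e^{-c' 2^{2(j-k)}} a_j^p$, and exchanging the order of summation the inner sum $\sum_{k\le j} 2^{-2k} e^{-c' 2^{2(j-k)}}$ is dominated by $2^{-2j}$, yielding the desired bound. The main obstacle is the low-frequency tail $\sum_{j<k} a_j$: a naive Hölder produces a divergent $k^{p-1}$ factor. I would overcome this with a \emph{weighted} Hölder using the geometric weight $2^{j\alpha}$ for some $0 < \alpha < 2/p$ (e.g.\ $\alpha = 1/p$), which yields
\begin{equation*}
\Big(\sum_{j<k} a_j\Big)^p \lesssim 2^{k\alpha p}\sum_{j<k} 2^{-j \alpha p} a_j^p.
\end{equation*}
After inserting this and swapping the order of summation, the outer geometric series $\sum_{k > j} 2^{(\alpha p - 2)k}$ converges (exactly because $\alpha p < 2$) to $\lesssim 2^{(\alpha p - 2) j}$, which cancels the weight $2^{-j \alpha p}$ and produces $\sum_{j\ge 1} 2^{-2j} a_j^p$. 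Any residual contribution from finitely many truncated or empty intervals $I_k$ is absorbed into the constant, completing the proof.
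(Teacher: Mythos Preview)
Your proof is correct and follows essentially the same strategy as the paper's: Littlewood--Paley decomposition, the multiplier bound of Lemma~\ref{multiplier2}, a split of the $j$-sum at $t\,2^{2j}\approx 1$, and a weighted H\"older inequality (your exponent $\alpha\in(0,2/p)$ corresponds exactly to the paper's $-a$ with $-2/p<a<0$). The only cosmetic differences are that you discretize $t$ into dyadic shells before summing in $j$, and you exploit the full exponential decay $e^{-ct2^{2j}}$ for the high-frequency tail, whereas the paper keeps $t$ continuous and uses only the polynomial consequence $M(t,j)\lesssim (t2^{2j})^{n-m}$.
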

\begin{proof}
We may assume that $\tilde{u}_0  \in C^\infty_0 (\mathbb{R}^n)$
since $C^\infty_0(\mathbb{R}^n)$ is dense in ${B}^{-\frac{2}p}_p(\mathbb{R}^n)$.
We use the dyadic partition of
unity $\hat{\psi} (\xi) + \sum_{j=1}^{\infty} \hat \phi(2^{-j} \xi)
=1$ for $\xi \in \mathbb{R}^n$, so that we can write
\begin{align*}
\hat v_1(t,\xi) = \hat\psi (\xi) e^{-t|\xi|^2}\widehat{ \tilde
u_0}(\xi)+ \sum_{j=1}^\infty \hat\phi (2^{-j} \xi) e^{-t|\xi |^2}
\widehat{ \tilde{u}_0}(\xi).
\end{align*}
For $t>0$ we have
\begin{eqnarray}
\begin{array}{ll} \label{u_1} \vspace{2mm}
&\int_{\mathbb{R}^n}  | v_1(t,x)|^pdx \\ \vspace{2mm} &  \leq
\int_{\mathbb{R}^n} \left|{\mathcal F}^{-1} \left(e^{-t|\xi|^2}
\hat\psi(\xi)\;\widehat{ \tilde u_0}(\xi)\right)(x)\right|^p dx +
\int_{\mathbb{R}^n} \left|{\mathcal F}^{-1} \Big(\sum_{j=1}^\infty
e^{-t|\xi|^2} \hat \phi_j(\xi) \;\widehat{ \tilde u_0}(\xi)
\Big)(x)\right|^p dx.
\end{array}
\end{eqnarray}
The first term on the right-hand side of \eqref{u_1} is dominated by
\begin{align}\label{negative2}
\|\psi * \tilde u_0\|^p _{L^p (\mathbb{R}^n)}.
\end{align}
Now, we estimate the second term on the right-hand side of
\eqref{u_1}. We use the facts that $\hat \phi_j = \Phi_j \hat
\phi_j$  for all $j$, where  $\Phi_j$ is defined in Lemma
\ref{multiplier2}. By Lemma \ref{multiplier2}, $\ \Phi_j( \xi) e^{-t
|\xi|^2}s$ are the $L^p(\mathbb{R}^n)$-Fourier multipliers with the
norms $M(t,j)$. Then we divide the sum as
\begin{align*}
&\int_{\mathbb{R}^n} \left|{\mathcal F}^{-1} \Big(\sum_{j=1}^\infty
e^{-t|\xi|^2} \hat \phi_j(\xi)\; \widehat{ \tilde
u_0}(\xi) \Big)(x)\right|^p dx \\
&= \int_{\mathbb{R}^n} \left|{\mathcal F}^{-1}
\Big(\sum_{j=1}^\infty \Phi_j( \xi) e^{-t|\xi|^2} \hat
\phi_j(\xi)\;\widehat{ \tilde
u_0}(\xi) \Big)(x)\right|^p dx\\
& \leq \Big(\sum_{2^{2j}\le1/t} M(t,j) \| \tilde u_0 *
\phi_j\|_{L^p} \Big)^p
+ \Big(\sum_{2^{2j}\ge1/t} M(t,j) \| \tilde u_0 *\phi_j\|_{L^p} \Big)^p\\
& =: I_1(t) + I_2(t).
\end{align*}


By Lemma \ref{multiplier2} we have $ M(t,j) \leq c$  for $t 2^{2 j}
\leq 1$. We  take $a$ satisfying $ -\frac{2}p < a < 0$ and then use
H\"older inequality to get
\begin{align*}
\int_0^T I_1(t)^p dt
& \lesssim \int_0^T  \Big(\sum_{2^{2j}\le1/t} 2^{-\frac{p}{p-1}aj} \Big)^{p-1} \sum_{2^{2j}\le1/t} 2^{paj} \|\phi_j*\tilde u_0\|^p_{L^p}dt \\
& \lesssim  \int_0^T t^{\frac12 pa } \sum_{2^{2j}\le1/t} 2^{paj} \|\phi_j*\tilde u_0\|^p_{L^p}dt\\
& \lesssim \sum_{j=1}^\infty 2^{paj} \|\phi_j*\tilde u_0\|^p_{L^p} \int_0^{2^{-2 j}}  t^{\frac12 pa }dt\\
& = c \sum_{j=1}^\infty 2^{-2 j } \|\phi_j*\tilde u_0\|^p_{L^p}.
\end{align*}
By Lemma \ref{multiplier2} again $ M(t,j) \leq c (t2^{2 j})^{-m}
\sum_{0 \leq i \leq n}(t 2^{2 j})^i \leq c 2^{(2 n-2 m)j} t^{n-m} $
for $t\cdot 2^{2 j} \geq 1$ and $m>0$. We fix $b>0$ and then choose
$m$ satisfying $ p\,2(n-m) + \frac12 p\,b +1 < 0$, so that we obtain
\begin{align*}
\int_0^T I_2(t)^p dt
&\lesssim \int_0^T    \Big(\sum_{2^{2j}\ge1/t} 2^{(2 n- 2 m)j} t^{n-m} \|\phi_j*\tilde u_0\|_{L^p} \Big)^pdt \\
& \lesssim \int_0^\infty t^{p ( n -  m) } \Big(\sum_{2^{2j}\ge1/t} 2^{-\frac{p}{p-1} bj} \Big)^{p-1} \sum_{2^{2j}\ge1/t}
2^{pbj}2^{p(2 n-2 m)j} \|\phi_j*\tilde u_0\|^p_{L^p}dt \\
& \lesssim \int_0^\infty t^{ p(  n- m )  + \frac12 pb} \sum_{2^{2j}\ge1/t} 2^{pbj}2^{p(2 n-2 m)j} \|\phi_j*\tilde u_0\|^p_{L^p}dt\\
& \lesssim \sum_{j=1}^\infty 2^{pbj}2^{p(2 n-2 m)j} \|\phi_j*\tilde u_0\|^p_{L^p} \int_{2^{-2j}}^\infty t^{p ( n - m )  + \frac12 pb}dt\\
& =c \sum_{j=1}^\infty 2^{-2 j  } \|\phi_j*\tilde u_0\|^p_{L^p}.
\end{align*}
\end{proof}

{\bf Proof of Lemma \ref{prop1}} \;\; The following is a classical
result (see \cite{LSU67}):
\begin{align}\label{w-2}
 \int_0^T  \| v_1(\omega,t, \cdot)\|_{H^2_p(\mathbb R^n)}^p dt + \int_{\mathbb R^n} \| v_1(\omega,\cdot, x)\|_{H^1_p(0,T)}^p dx \leq c\| \tilde u_0(\omega)\|^p_{B^{2-\frac2p}_p( \mathbb
 R^n)}, \quad \omega\in\Omega.
\end{align}
Using (\ref{w-2}), Lemma \ref{frac3p} and the following real
interpolations
\begin{eqnarray*}
(L^p (\mathbb R^n), H^{2}_p (\mathbb R^n))_{\frac{k}{2},p} = B^k_p
(\mathbb R^n),&&\hspace{-0.5cm} (L^p ((0,T)), H^1_p
((0,T))_{\frac{k}{2},p} =
B^{\frac{k}{2}}_p ((0,T)),\\
(B^{-\frac2p}_p(\mathbb{R}^n),B^{2-\frac2p}_p(\mathbb{R}^n))_{\frac{k}{2},p}&=&B^{k-\frac2p}_p(\mathbb{R}^n),
\end{eqnarray*}
we have
\begin{align*}
   \|v_1(\omega)\|^p_{B^{k,\frac12 k}(\mathbb{R}^n_T)}=
\int_0^T  \| v_1(\omega,t, \cdot)\|_{B^{k}_p(\mathbb R^n)}^p dt +
\int_{\mathbb R^n} \| v_1(\omega,\cdot,
x)\|_{B^{\frac{k}{2}}_p(0,T)}^p dx
   \leq c\| \tilde u_0(\omega)\|^p_{B^{k-\frac2p}_p( \mathbb R^n)}.
\end{align*}
This implies Lemma \ref{prop1}. \hspace{11cm}$\Box$

\mysection{Proof of Lemma \ref{prop2}}\label{20110324-2}

We need the space of the \emph{parabolic} Bessel potentials.
 For $l \in {\mathbb R}$ the
parabolic Bessel potential  $\Pi_l $ is  a distribution whose
Fourier transform  in ${\mathbb R}^{n+1} $ is defined by
\begin{eqnarray*}
\widehat{\Pi_{l}} (\tau, \xi) = c_k(1 +i\tau+
|\xi|^2)^{-\frac{l}{2}}, \quad \tau \in {\mathbb R}, \, \xi \in
\Rb.
\end{eqnarray*}
In particular, if $l > 0$, then
\begin{eqnarray}\label{bessel}
\Pi_l (t,x) = \left\{ \begin{array}{ll}
             c_l\; t^{\frac{l-n-2}{2}}\;e^{-t}\;e^{-\frac{|x|^2}{4t}} &
             \mbox{if       } t>0,\\\quad\\
               0  & \mbox{if       } t\leq 0
\end{array}
\right.
\end{eqnarray}
; see \cite{Jones68}. In particular, $\Pi_2=e^{-t}\Gamma$, where
$\Gamma$ is the heat kernel introduced in Section
\ref{introduction}.

For $1\leq p< \infty$ we define the space of the parabolic Bessel
potentials, ${ H}^{l,\frac12 l}_p ({\mathbb R}^{n+1})$, by
\begin{eqnarray*}
{H}^{l,\frac12 l}_p ({\mathbb R}^{n+1}) = \{ f \in {\mathcal
S}'({\mathbb R}^{n+1}) \; | \; \Pi_{-l}*f       \in L^p ({\mathbb
R}^{n+1}) \}
\end{eqnarray*}
with the norm $$\|f\|_{H^{l,\frac12 l}_p ({\mathbb R}^{n+1})} = \|
\Pi_{-l}*f  \|_{L^p({\mathbb R}^{n+1})},$$
 where
$*$ in this case is a convolution in ${\mathbb R}^{n+1}$ and
${\mathcal S}'({\mathbb R}^{n+1})$
 is the dual space of the Schwartz space
${\mathcal S}({\mathbb R}^{n+1})$. Note that if $l \geq 0$, we
have
\begin{align*}
 H^{l,\frac12 l}_p(\mathbb R^{n+1}) =
 L^p\Big({\mathbb R}; H^l_p(\Rb)\Big) \cap L^p\left(\Rb; H_p^{\frac12 l}({\mathbb R})\right).
 \end{align*}
For $l \geq 0$ we define
$$
H^{l,\frac12 l}_p({\mathbb R}^{n}_T) := \{ f|_{{\mathbb R}^{n}_T}
\;\; | \;\; f \in H^{l,\frac12 l}_p({\mathbb R}^{n+1}) \}
$$
and let $H^{l,\frac12 l}_{p,o} ({\mathbb R}^{n}_T)$ be the closure
of $C^{\infty}_c({\mathbb R}^{n}_T)$ in $H^{l,\frac12
l}_p({\mathbb R}^{n}_T)$.

For $l<0$ we also define $H^{l,\frac12 l}_{p}({\mathbb R}^{n}_T)$
and $H^{l,\frac12 l}_{p,o} ({\mathbb R}^{n}_T)$ as the dual spaces
of $H^{-l,-\frac12 l}_{q,o}({\mathbb R}^{n}_T)$ and
$H^{-l,-\frac12 l}_{q}({\mathbb R}^{n}_T)$ respectively with
$\frac1p+\frac1q=1$; $H^{l,\frac12 l}_{p}({\mathbb R}^{n}_T) =
(H^{-l,-\frac12 l}_{q,o}({\mathbb R}^{n}_T))^*, \,\, H^{l,\frac12
l}_{p,o}({\mathbb R}^{n}_T) = (H^{-l,-\frac12 l}_{q}({\mathbb
R}^{n}_T))^*$.

\vspace{0.3cm}

 {\bf Proof of Lemma \ref{prop2}} \quad We assumed $0<k<2$ and $f\in \mathbb{B}^{k-2,\frac12k-1}_{p,o}(D_T)$.
 Let $\tilde f$ is the extension of $f$ on ${\mathbb R}^{n+1}$.

1. We just show the case $k=0$
\begin{eqnarray}\label{part}
\| u_2(\omega)\|_{L^p({\mathbb R}^{n}_T)} \le c\| \tilde
 f(\omega)\|_{H^{-2,-1}_{p} ({\mathbb R}^{n+1})},\quad
\omega\in\Omega.
\end{eqnarray}
Then the classical result (\cite{LSU67}):
\begin{eqnarray}
\| u_2(\omega)\|_{H^{2,1}_p({\mathbb R}^{n}_T)} \leq c \| \tilde
f(\omega)\|_{L^p({\mathbb R}^{n+1})},\quad \omega\in\Omega \nonumber
\end{eqnarray}
and the real interpolations
\begin{eqnarray}
(L^p(\mathbb{R}^n_T),H^{2,1}_p(\mathbb{R}^n_T))_{\frac{k}{2},p}=B^{k,\frac12
k}_p(\mathbb{R}^n_T),\quad (H^{-2,-1}_p(\mathbb{R}^{n+1}),
L^p(\mathbb{R}^{n+1}))_{\frac{k}{2},p}=B^{k-2,\frac12
k-1}_p(\mathbb{R}^{n+1})\nonumber
\end{eqnarray}
lead us to
\begin{eqnarray}
\| u_2(\omega)\|_{{B}^{k,\frac12k}_p({\mathbb R}^{n}_T)} \le  c\|
\tilde f(\omega)\|_{{B}^{k-2,\frac12k -1}_{p}({\mathbb
R}^{n+1})},\quad \omega\in\Omega
\end{eqnarray}
and (\ref{estimate 2}) follows.

2. Since $ C^\infty_c(\mathbb R^n_T)$ is dense in $H^{l,\frac12
l}_{p,o} ({\mathbb R}^{n}_T)$ even for $l<0$, we may assume
$\tilde{f}$ is in $ C^\infty_c ({\mathbb R}^{n}_T)$. In this case
the representation
$$
u_2(\omega,t,x)=\int_0^t\int_{\Rb} \Ga(t-s,x-y)
\tilde{f}(\omega,s,y)\;dy\;ds
$$
is legal. Recalling $\Pi_2(t,x) = e^{-t} \Ga(t,x)$, we have
\begin{align*}
u_2(\omega,t,x) &= \int_0^t\int_{\Rb} e^{t-s}\Pi_2(t-s,x-y) \tilde{f}(\omega,s,y)\;dy\;ds 
  = e^{t}( \Pi_2*g(\omega,t,x)),
\end{align*}
where $g(\omega,s,y) = e^{-s} \tilde{f}(\omega,s,y)$. Hence,
\begin{align*}
\int_0^T\int_{\Rb} |u_2(\omega,t,x)|^p dxdt &= \int_0^T\int_{\Rb} e^{pt}|\Pi_2* g(\omega,t,x)|^pdxdt\\
        &\leq e^{pT} \int_{0}^\infty \int_{\Rb}|\Pi_2 *  g(\omega,t,x)|^pdxdt\\
        & \le e^{pT} \| g(\omega)\|^p_{H^{-2,-1}_p({\mathbb R}^{n+1})}\\
        & \lesssim e^{pT} \| \tilde{f}(\omega)\|^p_{H^{-2,-1}_p({\mathbb
        R}^{n+1})},
\end{align*}
where the last inequality follows by
\begin{eqnarray}
|<g(\omega),\phi>|=|<\tilde f(\omega),e^{-t}\phi>|\le \|\tilde
f(\omega)\|_{H^{-2,-1}_p({\mathbb
        R}^{n+1})}\;\|e^{-t}\phi\|_{H^{2,1}_p({\mathbb
        R}^{n+1})},\quad \phi\in H^{2,1}_p({\mathbb
        R}^{n+1})\nonumber
\end{eqnarray}
and the fact $\|e^{-t}\phi\|_{H^{2,1}_p({\mathbb
        R}^{n+1})}\lesssim \|\phi\|_{H^{2,1}_p({\mathbb
        R}^{n+1})}$.
We have received (\ref{part}) and the lemma is proved.
\hspace{7cm} $\Box$

\mysection{Proof of Lemma \ref{maintheo} (1)}  \label{20101216-1}

We only need to prove the case $T=1$:
\begin{eqnarray}
E \int_{\Rb} \int_0^1 \int_0^1 \frac{| v_3(t,x) -
v_3(s,x)|^p}{|t-s|^{1 + \frac{p}2k}}dsdt\;dx \lesssim \| \tilde
g\|^p_{L^p (\Om \times (0,1), \mathcal P, B^{k-1}_p
(\Rb))},\label{20110614-1}
\end{eqnarray}
where $\tilde g$ is the extension of $g$ and $v_3$ is defined in
(\ref{v3}) using $\tilde g$. Then the general case follows a
scaling argument with the fact that under the expectation we can
use any Brownian motion in the definition of $v_3$ and the
observation that $\bar w_r:=\frac{1}{\sqrt{T}}\,w_{\sqrt{T}\, r}$,
$r\in[0,1]$ is also a Brownian motion. Indeed, let\; $\tilde
g(\omega,r,y),\;\omega\in\Omega,\; r\in[0,T],\;y\in \mathbb{R}^n$
be given. Notice that we may assume that $\tilde g$ is smooth in
$y$. In this case
\begin{eqnarray}
v_3(t,x)&=&\int^t_0\int_{\mathbb{R}^n}<\Gamma (t-r,x-\cdot),\tilde
g(r,\cdot)>\,dw_r\nonumber\\
&=&\int^t_0\int_{\mathbb{R}^n}\Gamma (t-r,x-y)\tilde
g(r,y)dy\,dw_r,\quad\;\;t\in[0,T],\;x\in \mathbb{R}^n.\nonumber
\end{eqnarray}
Define $\bar v_3(t,x) = v_3(Tt, \sqrt{T} x)$, $t\in[0,1]$ and
$\bar{\tilde g} (r,y) = \tilde g(T r, \sqrt{T} y)$, $r\in [0,1]$.
Note
\begin{align*}
\bar v_3(t,x)
&= \int_0^{T t} \int_{\Rb} \Ga( Tt-r,\sqrt{T} x-y) \tilde g(r, y) dy \;dw_r \\
&= \sqrt{T} \int_0^{t} \int_{\Rb} (\sqrt{T})^n\Ga(Tt-Tr,\sqrt{T} x-\sqrt{T} y) \bar{\tilde g}(r, y) dy \;d \bar w_r\\
& = \sqrt{T} \int_0^{t} \int_{\Rb} \Ga(t-r,x-y) \bar{\tilde g}
(s,y) dy d \bar w_r.
\end{align*}
By obvious scaling and (\ref{20110614-1}) we receive
\begin{eqnarray}
&&\hspace{1.5cm}E \int_{\Rb} \int_0^T \int_0^T \frac{|v_3(t,x) -
v_3(s,x)|^p}{|t-s|^{1 + \frac{p}2k}}dsdt\;dx\nonumber\\
&=& T^{1 -\frac{p}2k + \frac{n}2}\; E \int_{\Rb} \int_0^1 \int_0^1
\frac{|\bar v_3 (t, x) -\bar v_3 (s,x)|^p}{|t-s|^{1 +
\frac{p}2k}}dsdt\;dx.\nonumber\\
&\lesssim& T^{1 -\frac{p}2k + \frac{n}2}\; \| \bar{\tilde
g}\|^p_{L^p (\Om \times (0,1), \mathcal P, B^{k-1}_p
(\Rb))}.\label{20110614-2}
\end{eqnarray}
To dominate (\ref{20110614-2}) by $\|\tilde g\|^p_{L^p (\Om \times
(0,T), \mathcal P, B^{k-1}_p (\Rb))}$ we observe the following.
Given a smooth function $f=f(y)$ define
$f_{\sqrt{T}}(y)=f\left(\sqrt{T}\,y\right)$. Then for any $\phi\in
C^{\infty}_0(\mathbb{R}^n),\;\|\phi\|_{B^{1-k}_q(\mathbb{R}^n)}=1$
with $\frac 1p+\frac 1q=1$,
\begin{eqnarray}
\int_{\mathbb{R}^n} f_{\sqrt{T}}(y)\phi(y)dy&=& T^{-\frac
n2}\int_{\mathbb{R}^n}
f(y)\phi_{\frac{1}{\sqrt{T}}}(y)dy\nonumber\\
&\le& T^{-\frac
n2}\|f\|_{B^{k-1}_p(\mathbb{R}^n)}\,\|\phi_{\frac{1}{\sqrt{T}}}\|_{B^{1-k}_q(\mathbb{R}^n)}\nonumber\\
&\le&T^{-\frac n2}\|f\|_{B^{k-1}_p(\mathbb{R}^n)}\cdot T^{\frac
n2}(1\vee
T^{-p(1-k)})\,\|\phi\|_{B^{1-k}_q(\mathbb{R}^n)}\nonumber\\
&\le& (1\vee
T^{-p(1-k)})\,\|f\|_{B^{k-1}_p(\mathbb{R}^n)};\nonumber
\end{eqnarray}
see Remark \ref{rem2} (2) for the second inequality. Hence,
$\|f_{\sqrt{T}}\|^p_{B^{k-1}_p(\mathbb{R}^n)}\le (1\vee
T^{k-1})\,\|f\|^p_{B^{k-1}_p(\mathbb{R}^n)}$. This and another
simple scaling imply that (\ref{20110614-2}) is indeed bounded by
$c\| \tilde g\|^p_{L^p (\Om \times (0,T), \mathcal P, B^{k-1}_p
(\Rb))}$, where $c$ depends only on $p,n,k,T$.

We need two more lemmas to prove Lemma \ref{maintheo} (1) with
$T=1$. The proof of the following lemmas are placed at the end of
this section.
\begin{lemm}\label{20110324-3}
Let $\frac1p<k<1$, $p\ge 2$ and $\tilde g\in \mathbb{H}^{k-1}_p
(\Rb_1) $. Then for $i=-1,-2,\ldots$ we have
\begin{align}\label{main3}
E \int_{\Rb} \int \int_{ 4^i\leq |t -s| \leq 4^{i+1}}
\frac{|v_3(t,x) - v_3(s,x)|^p}{|t-s|^{1 + \frac{p}2k}}dsdt\;dx
\lesssim \|\tilde g\|^p_{L^p ( \Om \times (0,1), \mathcal P,
H^{k-1}_p (\Rb))}.
\end{align}
\end{lemm}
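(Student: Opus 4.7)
Assuming $s<t$, set $\delta:=t-s\in[4^i,4^{i+1}]$ and decompose
\begin{align*}
v_3(t,x)-v_3(s,x) &= \underbrace{\int_0^s\bigl[\Ga(t-r,\cdot)-\Ga(s-r,\cdot)\bigr]*\tilde g(r,\cdot)(x)\,dw_r}_{I_1(s,t,x)} \\
&\quad +\underbrace{\int_s^t\Ga(t-r,\cdot)*\tilde g(r,\cdot)(x)\,dw_r}_{I_2(s,t,x)}.
\end{align*}
The idea is: (i) apply the Burkholder-Davis-Gundy (BDG) inequality (valid for $p\ge 2$) to each $I_j$, obtaining a $p/2$-moment of the $L^2_r$-integral of the squared integrand; (ii) swap $L^p_x$ with $L^2_r$ by Minkowski (again using $p\ge 2$); (iii) control the resulting $L^p(\Rb)$ norms of $\Ga(\tau,\cdot)*\tilde g(r,\cdot)$ and $[\Ga(\tau+\delta,\cdot)-\Ga(\tau,\cdot)]*\tilde g(r,\cdot)$ by Fourier multiplier estimates; (iv) integrate over the dyadic time annulus.

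The key intermediate bounds are
\begin{align*}
\|\Ga(\tau,\cdot)*f\|_{L^p(\Rb)} &\lesssim \tau^{-(1-k)/2}\,\|f\|_{H^{k-1}_p(\Rb)},\\
\|[\Ga(\tau+\delta,\cdot)-\Ga(\tau,\cdot)]*f\|_{L^p(\Rb)} &\lesssim \delta^{\alpha}\tau^{-\alpha-(1-k)/2}\,\|f\|_{H^{k-1}_p(\Rb)}
\end{align*}
for $\tau,\delta\in(0,1]$ and a suitable $\alpha\in(0,k/2)$, which follow from the dyadic multiplier calculus already used in Lemma \ref{multiplier2} applied to the symbols $(1+|\xi|^2)^{(1-k)/2}e^{-\tau|\xi|^2}$ and its discrete $\delta$-difference, together with $|e^{-\delta|\xi|^2}-1|\le(\delta|\xi|^2)^\alpha$. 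Setting $h(r):=\|\tilde g(r,\cdot)\|_{L^p(\Om,H^{k-1}_p(\Rb))}$, these yield
\begin{align*}
E\int_{\Rb}|I_2(s,t,x)|^p\,dx &\lesssim \Bigl(\int_0^{\delta}\tau^{-(1-k)}h(t-\tau)^2\,d\tau\Bigr)^{p/2},\\
E\int_{\Rb}|I_1(s,t,x)|^p\,dx &\lesssim \delta^{p\alpha}\Bigl(\int_0^{s}(s-r)^{-2\alpha-(1-k)}h(r)^2\,dr\Bigr)^{p/2}.
\end{align*}

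Dividing by $|t-s|^{1+pk/2}=\delta^{1+pk/2}$ and integrating $(s,t)$ over $\{4^i\le|t-s|\le 4^{i+1}\}\cap(0,1)^2$, a Hölder step converts the inner $L^2_r$ integral into an $L^p_r$-integral of $h^p$ at the cost of a factor $\delta^{pk/2-1}$; the remaining $\delta^{-2}$ weight in $(s,t)$ is absorbed by the $O(4^{2i})$ measure of the set $\{(s,t): s<r<t,\,\delta\in[4^i,4^{i+1}]\}$ for each fixed $r$, via Fubini. This yields the uniform bound $\int_0^1 h(r)^p\,dr = \|\tilde g\|^p_{\mathbb H^{k-1}_p(\Rb_1)}$, which is the claim.

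The main obstacle is the sharp threshold $k>1/p$: a naive single Hölder step in the time integrals would force the more restrictive condition $k>2/p$. To cover the full range $1/p<k<1$, the multiplier estimates must be refined via a Littlewood-Paley decomposition $\tilde g=\sum_j\tilde g_j$ with frequencies $\sim 2^j$, using Bernstein's inequality and the sharp exponential decay $e^{-c\tau 2^{2j}}$ from Lemma \ref{multiplier2}, and summing the dyadic pieces using $p\ge 2$; equivalently, one may real-interpolate between the $k=0$ endpoint and Krylov's $L^p$-theory at $k=1$ \cite{K1}. The condition $k>1/p$ then emerges naturally from the convergence of the dyadic sum in $j$, and the remaining bookkeeping is routine.
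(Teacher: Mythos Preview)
Your setup---the decomposition of $v_3(t,x)-v_3(s,x)$ into the two stochastic integrals, the application of BDG, Minkowski in $x$, and the multiplier bounds for $\Ga(\tau,\cdot)*f$ and its $\delta$-difference---matches the paper exactly. The divergence is in the final time integration, and there your proposal contains a genuine gap.

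You are right that a single H\"older step on $\bigl(\int_0^\delta \tau^{-(1-k)}h(t-\tau)^2\,d\tau\bigr)^{p/2}$ forces $k>2/p$. But the remedy you sketch (Littlewood--Paley in $x$, or interpolation between $k=0$ and $k=1$ via Krylov's theory) is not substantiated, and in fact misidentifies the issue: the hypothesis $k>1/p$ is \emph{not used anywhere} in the paper's proof of this lemma---the argument works for all $0<k<1$. So there is nothing for the dyadic sum in $j$ to produce, and an interpolation in the spatial-regularity index does not touch the time integral where your obstruction lives.

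The paper's device is different and more elementary. After BDG and Minkowski one is left with
\[
4^{-i(1+\frac{p}{2}k)}\,E\int\!\!\!\int_{D_i}\Bigl(\int_s^t (t-r)^{k-1}\|\tilde g(r,\cdot)\|_{H^{k-1}_p}^2\,dr\Bigr)^{p/2}ds\,dt,
\]
where $D_i=\{(s,t):4^i\le t-s<4^{i+1}\}$. Rather than applying H\"older inside, the paper treats $f\mapsto (T_1f)(s,t)=\int_s^t(t-r)^{k-1}f(r)\,dr$ as a linear operator from functions on $(0,1)$ to functions on $D_i$, and proves by direct computation that $T_1:L^1\to L^1$ and $T_1:L^\infty\to L^\infty$ are bounded with norms $\lesssim 4^{i(k+1)}$ and $4^{ik}$, respectively (Lemma~\ref{20110519-1}). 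Real interpolation then gives $\|T_1 f\|_{L^{p/2}(D_i)}\lesssim 4^{i(k+2/p)}\|f\|_{L^{p/2}(0,1)}$ with \emph{no restriction on $k$ beyond $0<k<1$}; the exponent $4^{i(k+2/p)\cdot p/2}$ exactly cancels $4^{-i(1+pk/2)}$. For the difference term $I_1$ the paper further splits the $r$-integral at $s-r=4^i$: on the near piece it discards the cancellation and bounds each Gaussian separately via the analogous operator $T_2$; on the far piece it uses the full Lipschitz-in-time estimate (your $\alpha=1$) together with the operator $T_3f=\int_0^{(s-4^i)\vee 0}(s-r)^{k-3}f(r)\,dr$, again handled by $L^1$--$L^\infty$ interpolation. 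This near/far split plus operator interpolation replaces both your fractional-$\alpha$ difference estimate and the Littlewood--Paley machinery, and closes the argument cleanly for all $0<k<1$.
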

\vspace{0.3cm}

Let $X_0$ and $X_1$ be a couple of Banach spaces continuously
embedded in a topological vector space and let $Y_0$ and $Y_1$ be
another such couple. We denote the real interpolation spaces
\begin{eqnarray}\label{intermediate}
X_{\te q} := (X_0, X_1 )_{\te ,q}, \,\,\, Y_{\te q} := ( Y_0, Y_1
)_{\te, q}, \quad 0 < \te < 1, \quad  1 \leq q \leq \infty
\end{eqnarray}
and the following well known result (see Theorem 1.3 in \cite{P2}):
\begin{lemm}\label{lemm4}
Let $T = \sum_{-\infty}^{\infty} T_i$, where $T_i : X_\nu \ri
Y_\nu $
 are bounded linear operators with norms  $M_{i,\nu}$
 such that $M_{i,\nu} \leq c \om^{i(\te - \nu)}, \nu = 0 ,1,$
  for some fixed  $\om\neq 1$ and $0 < \te < 1$.
 Then $T :X_{\te1} \ri Y_{\te\infty}$
 is a bounded linear operator.
\end{lemm}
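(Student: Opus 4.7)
The plan is to prove Lemma \ref{lemm4} by a direct $K$-functional calculation, which is the standard route for boundedness results between real interpolation spaces. The idea is to feed the hypothesis $M_{i,\nu} \le c\,\om^{i(\te-\nu)}$ into the subadditivity of $K$ across the decomposition $T = \sum_i T_i$, and then to convert a geometric sum into an integral. Without loss of generality I would assume $\om > 1$ (otherwise relabel $i \to -i$). Throughout I would use the standard characterizations
\begin{equation*}
\|x\|_{X_{\te,1}} \thickapprox \int_0^\infty s^{-\te} K(s,x;X_0,X_1)\,\tfrac{ds}{s},\qquad \|y\|_{Y_{\te,\infty}} = \sup_{t>0} t^{-\te} K(t,y;Y_0,Y_1),
\end{equation*}
with the Peetre $K$-functional $K(t,x;X_0,X_1) = \inf_{x=a+b}(\|a\|_{X_0}+t\|b\|_{X_1})$.

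First, I would establish the individual bound $K(s, T_i x; Y_0, Y_1) \le M_{i,0}\,K\!\left(s\, M_{i,1}/M_{i,0},\, x; X_0, X_1\right)$: given any splitting $x = a+b$, the splitting $T_i x = T_i a + T_i b$ lies in $Y_0 + Y_1$ with $\|T_i a\|_{Y_0} \le M_{i,0}\|a\|_{X_0}$ and $\|T_i b\|_{Y_1} \le M_{i,1}\|b\|_{X_1}$, and taking the infimum over such decompositions yields the claim. Plugging in $M_{i,0} \le c\,\om^{i\te}$ and $M_{i,1}/M_{i,0} \le \om^{-i}$, and then summing via the trivial subadditivity $K(t, Tx) \le \sum_i K(t, T_i x)$, I arrive at
\begin{equation*}
t^{-\te}\, K(t, Tx;\,Y_0,Y_1)\; \le\; c\sum_{i\in\mathbb{Z}} (\om^{-i}t)^{-\te}\, K(\om^{-i}t,\,x;\,X_0,X_1),
\end{equation*}
where I have used the key algebraic identity $t^{-\te}\om^{i\te} = (\om^{-i}t)^{-\te}$. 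The right-hand side is a geometric sampling of the function $s \mapsto s^{-\te} K(s,x;X_0,X_1)$ along $\{\om^{-i}t\}_i$; comparing this sum to $\int_0^\infty s^{-\te} K(s,x)\,ds/s \thickapprox \|x\|_{X_{\te,1}}$ and then taking the supremum in $t$ on the left yields exactly $\|Tx\|_{Y_{\te,\infty}} \le c\,\|x\|_{X_{\te,1}}$.

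The only delicate step is the passage from the discrete geometric sum to the integral, uniformly in the base point $t>0$. This requires a quasi-monotonicity estimate of the form $K(\om s, x) \le C\, K(s, x)$, which is standard and follows from the concavity and monotonicity of $K(\cdot,x)$ together with $K$ being increasing and $K(\cdot,x)/(\cdot)$ decreasing. With this in hand, the sum over $\{\om^{-i}t\}_{i\in\mathbb{Z}}$ is dominated by a fixed multiple (depending on $\om$ but not on $t$) of the integral over $(0,\infty)$, and everything else reduces to routine bookkeeping of exponents.
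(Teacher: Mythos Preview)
The paper does not give its own proof of this lemma; it simply quotes it as a known result, citing Theorem~1.3 in Peetre \cite{P2}. So there is nothing to compare your argument against on the paper's side.

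Your $K$-functional argument is the standard route and is correct in outline. One small point of presentation: writing ``$M_{i,1}/M_{i,0} \le \om^{-i}$'' is not literally justified, since both $M_{i,0}$ and $M_{i,1}$ are only bounded \emph{above} and you cannot divide two upper bounds. The clean way is to go directly from the splitting $x=a+b$ to
\[
\|T_i a\|_{Y_0} + s\,\|T_i b\|_{Y_1} \;\le\; c\,\om^{i\te}\|a\|_{X_0} + c\,s\,\om^{i(\te-1)}\|b\|_{X_1}
\;=\; c\,\om^{i\te}\bigl(\|a\|_{X_0} + s\,\om^{-i}\|b\|_{X_1}\bigr),
\]
which gives $K(s,T_i x;Y_0,Y_1)\le c\,\om^{i\te}K(s\,\om^{-i},x;X_0,X_1)$ without ever forming the ratio $M_{i,1}/M_{i,0}$. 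The rest of your argument---summing, rewriting $t^{-\te}\om^{i\te}=(\om^{-i}t)^{-\te}$, and dominating the geometric sample by the integral via the monotonicity of $K(\cdot,x)$ and $K(\cdot,x)/(\cdot)$---is exactly the standard proof and goes through as you describe. You might also remark that the same estimate shows the series $\sum_i T_i x$ converges absolutely in $Y_0+Y_1$ for $x\in X_{\te 1}$, which justifies the countable subadditivity of $K$ you invoke.
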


Let us denote $S\tilde g:=v_3$.

{\bf Proof of Lemma \ref{maintheo} (1)} \quad 1. As we discussed,
it is enough to consider the case $T=1$. Recall $\frac1p < k <1$
and $p\ge 2$. Note that the extension $\tilde g$ of $g$ is in $L^p
(\Om \times (0,1), \mathcal P, B^{k-1}_p (\Rb))$. Since the random
function $S\tilde g$ belongs to $\mathbb B^{k}_p (\Rb_1)$ and
satisfies (\ref{estimate 3}) (Lemma \ref{prop3}), to prove
(\ref{equality2}) we only need to show
\begin{eqnarray}
E \int_{\Rb} \int_0^1 \int_0^1 \frac{|S\tilde g(t,x) - S\tilde
g(s,x)|^p}{|t-s|^{1 + \frac{p}2k}}dsdt\;dx \lesssim \| \tilde
g\|^p_{L^p (\Om \times (0,1), \mathcal P, B^{k-1}_p
(\Rb))}\label{tong}
\end{eqnarray}
; see (\ref{anisotropic norm}) and the time version of  Remark
\ref{rem2} (2). We follows the outline of \cite{JW}.

2. Define the space $Y$ whose element $h:\Om \times (0,1)^2 \times
\Rb \ri {\bf C}$ satisfies
\begin{eqnarray*}
\|h\|_{Y}^p :=  E \int_{\Rb} \int_0^1 \int_0^1
\frac{|h(t,s,x)|^p}{|t-s|}dsdt\; dx    < \infty.
\end{eqnarray*}
Let $\frac1p < \al_1<k < \al_2 < 1$. Denote
\begin{eqnarray}
X_{\nu}=L^p (  \Om \times (0,1), {\mathcal P}, {H}^{\al_\nu-1}_p(\Rb
) ),\quad Y_{\nu}=Y,\quad \nu = 1,2\nonumber
\end{eqnarray}
and define the operators $T_{i}:  X_{\nu}\ri Y_{\nu} $
($i=-1,-2,\ldots$) by
\begin{eqnarray*}
T_{i} \tilde g (\omega,t,s,x) =\left\{ \begin{array}{ll}
          \frac{S\tilde g  (\omega,t,x) - S\tilde g  (\omega,s,x)}{|t-s|^{\frac12k}},
            & \mbox{if         }  4^i \leq |t-s| < 4^{i+1}, \\
         0 ,& \mbox{otherwise}.
\end{array}
\right.
\end{eqnarray*}
Then, using Lemma \ref{20110324-3}, we have
\begin{eqnarray*}
\|T_{i}\tilde g \|_{Y_\nu} \lesssim 2^{i(\al_\nu -k)} \|\tilde g
\|_{X_{\nu}}, \quad \nu = 1,2,\quad i=-1,-2,\ldots.
\end{eqnarray*}
 As we take $\te = \frac{k - \al_1}{\al_2 - \al_1}$
and $\ga = 2^{\al_1 - \al_2}$, the norms $M_{i,\nu}$ of the map
$T_{i}:  X_{\nu}\ri Y_{\nu} $ satisfy
$$
M_{i\nu} \lesssim 2^{i(\al_\nu - k)} = c\ga^{i(\te - \nu)}.
$$
Note that $Y_{\te \infty} = Y $. Hence, by Lemma \ref{lemm4} we have
\begin{align} \label{R4}
E \int_{{\Rb}} \int\int_{| t-s| <1} \frac{|S\tilde g (t,x)- S\tilde
g (s,x)|^p}{|t-s|^{1 + \frac{p}2  k}}
   dsdt\;dx \lesssim \|\tilde g  \|^p_{ X_{\theta 1} },
\end{align}
where
\begin{eqnarray}
X_{\theta 1} &:=& \left(  L^p(  \Om \times (0,1) ,{\mathcal P},
{H}^{\al_1-1}_p(\Rb ) ), L^p(  \Om \times (0,1), {\mathcal P},
{H}^{\al_2-1}_p(\Rb ) ) \right)_{\te,1}.\nonumber
\end{eqnarray}

3. Now, choose $k_1,k_2$ and set $\eta\in(0,1)$ so that
\begin{eqnarray*}
\frac1p < \al_1 <k_1 < k< k_2 < \al_2 < 1,\quad \quad k =
(1-\eta)k_1 + \eta k_2.
\end{eqnarray*}
Denote $\te_{\mu} = \frac{k_{\mu} - \al_1}{\al_2 - \al_1}$,
$\mu=1,2$. Then (\ref{R4}) holds for the quadruples
$(\al_1,k_1,\al_2,\te_1)$ and $(\al_1,k_2,\al_2,\te_2)$. By Theorem
3.11.5 in \cite{BL} and lemma \ref{interpolationlemma} we have
\begin{align}
( X_{\theta_1 1}, X_{\te_2 1})_{\eta, p }
    & =(L^p(\Om \times (0,1) , {\mathcal P}, H_{p}^{\al_0-1} ({\mathbb
R}^{n})) ,
 L^p(\Om \times (0,1) ,{\mathcal P},
H_{p}^{\al_1-1} ({\mathbb R}^{n})) )_{\te, p}\nonumber\\
& =L^p(\Om \times (0,1) ,{\mathcal P}, ( H_{p}^{\al_0-1} ({\mathbb
R}^{n}), H_{p}^{\al_1-1} ({\mathbb R}^{n})_{\te,p})\nonumber\\
& = L^p(\Om \times (0,1) ,{\mathcal P}, B_p^{k-1} ({\mathbb
R}^{n})).\nonumber
\end{align}
On the other hand define the weights on  $d\pi:=dPdtds\,dx$ by
\begin{eqnarray*}
w_{\mu}=w_{\mu}(\omega,t,s,x)=\frac{1}{|t-s|^{1+\frac p2 k_{\mu}}},
\mu=1,2,\quad w=w^{1-\eta}_1\,w^{\eta}_2.
\end{eqnarray*}
Then by Theorem 5.4.1 (Stein-Weiss interpolation theorem) in
\cite{BL} we have
\begin{eqnarray}
\left(L^p(\Omega\times (0,1)^2\times
\mathbb{R}^n,w_1d\pi),L^p(\Omega\times (0,1)^2\times
\mathbb{R}^n,w_2d\pi)\right)_{\eta,p}=L^p(\Omega\times (0,1)^2\times
\mathbb{R}^n,w\,d\pi).\nonumber
\end{eqnarray}
Hence, we receive \eqref{tong}. Lemma \ref{maintheo} (1) now
follows. $\Box$

Now, we prove Lemma \ref{20110324-3}. We need the followings.
Recall that $\mathcal S(\Rb)$ is dense in any
$\mathcal{B}^k_p(\mathbb R^n)$.
\begin{lemm}\label{multiplier1}
Let $ l<0$, $1<q<\infty$ and $g \in \mathcal S(\Rb)$. Then the
followings hold.
\begin{itemize}
\item[(1)] For  $t>0$,
 \begin{align*}
 \left(\int_{\Rb}\left|\int_{\Rb} \Gamma(t,x-y)g(y) dy\right|^q dx \right)^{1/q}
 \lesssim (1+t^{\frac{l}2}) \|g \|_{H^l_p(\Rb)}.
 \end{align*}

\item[(2)] For   $t, \, h>0$,
\begin{align*}
\left(\int_{\Rb} \left|\int_{\Rb}
\Big(\Gamma(t+h,x-y)-\Gamma(t,x-y)\Big) g(y) dy\right|^q
dx\right)^{1/q} \lesssim h( t^{-1} +t^{\frac{l}2-1}) \|g
\|_{H^l_p(\Rb)}.
\end{align*}
\end{itemize}
\end{lemm}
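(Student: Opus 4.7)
The plan is to use Young's convolution inequality to reduce each bound to an $L^1$-norm estimate on an explicit convolution kernel, and then to obtain the kernel bound from a frequency decomposition combined with the heat-semigroup scaling $\Ga(t,x)=t^{-n/2}\Ga_1(x/\sqrt t)$, where $\Ga_1(\cdot) := \Ga(1,\cdot)$.

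Setting $f:=(1-\De)^{l/2}g$, the definition of the Bessel-potential space gives $f\in L^p(\Rb)$ with $\|f\|_{L^p}=\|g\|_{H^l_p}$, and one may rewrite $\Ga(t,\cdot)*g=K_t*f$ where $\widehat{K_t}(\xi)=(1+|\xi|^2)^{|l|/2}e^{-t|\xi|^2}$ (recall $l<0$). By Young's inequality, statement (1) reduces to showing $\|K_t\|_{L^1(\Rb)}\lesssim 1+t^{l/2}$. I would split $K_t=K_t^{\mathrm{lo}}+K_t^{\mathrm{hi}}$ using a smooth cut-off $\eta\in C_c^\infty(\Rb)$ with $\eta\equiv 1$ on $\{|\xi|\le 1\}$. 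The low-frequency piece $K_t^{\mathrm{lo}}$ is the convolution of a fixed Schwartz function with $\Ga(t,\cdot)$, so its $L^1$-norm is bounded uniformly in $t$. For the high-frequency piece one can replace $(1+|\xi|^2)^{|l|/2}$ by $|\xi|^{|l|}$ modulo a symbol of strictly lower order, whose inverse Fourier transform multiplied by $e^{-t|\xi|^2}$ remains uniformly in $L^1$; the leading part then agrees, up to a harmless low-frequency cut-off, with $(-\De)^{|l|/2}\Ga(t,\cdot)$, and the scaling $(-\De)^{|l|/2}\Ga(t,x)=t^{-n/2-|l|/2}[(-\De)^{|l|/2}\Ga_1](x/\sqrt t)$ yields $\|(-\De)^{|l|/2}\Ga(t,\cdot)\|_{L^1}=c\,t^{-|l|/2}=c\,t^{l/2}$.

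For (2), using $\pa_s\Ga(s,\cdot)=\De\Ga(s,\cdot)$, one writes $\Ga(t+h,\cdot)-\Ga(t,\cdot)=\int_t^{t+h}\De\Ga(s,\cdot)\,ds$, so that
\[
(\Ga(t+h,\cdot)-\Ga(t,\cdot))*g=\int_t^{t+h}\bigl[(1-\De)^{-l/2}\De\Ga(s,\cdot)\bigr]*f\,ds.
\]
By Minkowski's integral inequality and Young's inequality it suffices to prove $\|(1-\De)^{-l/2}\De\Ga(s,\cdot)\|_{L^1}\lesssim s^{-1}+s^{l/2-1}$ and to integrate. This kernel bound follows from the semigroup factorization $(1-\De)^{-l/2}\De\Ga(s,\cdot)=K_{s/2}*\De\Ga(s/2,\cdot)$ together with the already-established $\|K_{s/2}\|_{L^1}\lesssim 1+s^{l/2}$ and the classical estimate $\|\De\Ga(s/2,\cdot)\|_{L^1}\lesssim s^{-1}$. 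Since $l<0$ both $s^{-1}$ and $s^{l/2-1}$ are decreasing in $s$, so $\int_t^{t+h}(s^{-1}+s^{l/2-1})\,ds\le h(t^{-1}+t^{l/2-1})$, as required.

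The principal technical obstacle is the high-frequency analysis in (1): the symbol $(1+|\xi|^2)^{|l|/2}$ is not polynomial when $|l|$ is not an even integer, so one cannot simply peel off integer derivatives. A clean way around this is to perform a Littlewood-Paley decomposition of $\widehat{K_t}$ and bound each dyadic piece by a Mikhlin-type multiplier argument, exploiting the Gaussian factor $e^{-t|\xi|^2}$ to produce exponential decay in the scale parameter when $2^{2j}\gtrsim 1/t$ and polynomial summability otherwise; the two regimes contribute the $1$ and the $t^{l/2}$ respectively. Once the $L^1$-bound on $K_t$ is in place the rest of the proof consists of standard convolution estimates.
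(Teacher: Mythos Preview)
Your overall strategy---reduce to Young's inequality and an $L^1$-bound on an explicit kernel---is exactly the paper's, but the execution differs in both parts. In (1) the paper sidesteps your ``principal technical obstacle'' entirely: rather than decomposing the symbol $(1+|\xi|^2)^{|l|/2}$, it inserts the Mikhlin multiplier $m(\xi)=(1+|\xi|^2)^{-l/2}/(1+|\xi|^{-l})$, so that the kernel to be estimated has Fourier transform $(1+|\xi|^{-l})e^{-t|\xi|^2}$, which splits as $e^{-t|\xi|^2}+|\xi|^{-l}e^{-t|\xi|^2}$ and yields $\|K^t\|_{L^1}\lesssim 1+t^{l/2}$ by pure scaling in one line. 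Your low/high or Littlewood--Paley decomposition would also succeed, but is heavier and leaves the lower-order remainder terms to be tracked. In (2) you take a genuinely different route: instead of your time-integral representation and semigroup factorization $K_{s/2}*\De\Ga(s/2,\cdot)$, the paper writes the difference on the Fourier side as $-h|\xi|^2(1+|\xi|^{-l})e^{-t|\xi|^2}\cdot\frac{1-e^{-h|\xi|^2}}{h|\xi|^2}\cdot m(\xi)$, observes that $\frac{1-e^{-h|\xi|^2}}{h|\xi|^2}$ is an $L^q$-multiplier uniformly in $h$, and again reads off the $L^1$-bound of the remaining kernel by scaling. Your argument has the virtue of recycling part (1) directly and avoiding a second multiplier check; the paper's keeps the two parts parallel and slightly shorter.
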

\begin{proof}
(1) Denote $\mathcal{F}(h)=\hat h$, the spatial Fourier transform of
$h$. We observe that
\[\mathcal{F}(\Gamma(t,\cdot)*g)(\xi) = (1+|\xi|^{-l}) e^{-t|\xi|^2}\cdot m(\xi) (1+|\xi|^2)^{\frac l2} \widehat{g}(\xi),\]
where $m(\xi) = \frac{(1+|\xi|^2)^{-l/2}}{1+|\xi|^{-l}}$. We note
that $m$ is an $L^q$-Fourier multiplier, i.e., the operator $T_m$
defined by $\widehat{T_m(f)}(\xi)=m(\xi)\hat{f}(\xi)$ is
$L^q$-bounded. On the other hand we set
\[\widehat{K^t}(\xi) = (1+|\xi|^{-l}) e^{-t|\xi|^2}.\]
Since $\| \mathcal{F}^{-1}(\widehat{\phi}(\sqrt{t}\xi))\|_1 =
\|\phi\|_1$, we obtain
\begin{align*}
\|K^t\|_1 \le \|\mathcal{F}^{-1}(e^{-t|\xi|^2})\|_1 + t^{\frac l2}
\|\mathcal{F}^{-1}((t|\xi|^2)^{-\frac l2} e^{-t|\xi|^2})\|_1
\lesssim (1+ t^{\frac l2}).
\end{align*}
We have
\[\Gamma(t,\cdot)*g = K^t*(T_m(I-\Delta)^{\frac l2}g).\]
By Young's inequality and the multiplier theorem, we conclude that
for $1<q<\infty$
\begin{align*}
\left(\int_{\Rb} \left|\int_{\Rb} \Gamma(t,x-y)g(y) dy\right|^q
dx\right)^{1/q} \lesssim (1+ t^{\frac l2}) \|(I-\Delta)^{\frac
l2}g\|_q=(1+ t^{\frac l2}) \|g\|_{H^l_p(\mathbb{R}^n)}.
\end{align*}
(2) We set
\[\mathcal{F}((\Gamma(t + h,\cdot) - \Ga(t, \cdot))*g)(\xi) =
(-h|\xi|^2)(1+|\xi|^{-l}) e^{-t|\xi|^2}\cdot
\frac{1-e^{-h|\xi|^2}}{h|\xi|^2}\cdot m(\xi) (1+|\xi|^2)^{\frac
l2} \widehat{g}(\xi),\]
where $m(\xi) = \frac{(1+|\xi|^2)^{-l/2}}{1+|\xi|^{-l}}$. Note
that $\frac{1-e^{-h|\xi|^2}}{h|\xi|^2}$ is the $L^q$-Fourier
multiplier and the norm is independent of $h$. Set
\[\widehat{K^{t,h}}(\xi) = (-h|\xi|^2)(1+|\xi|^{-l}) e^{-t|\xi|^2}.\]
Then we have $\|K^{t,h}\|_1 \lesssim h( t^{-1} +t^{\frac l2-1})$
and the rest is similar to the case (1).
\end{proof}

\begin{lemm}\label{20110519-1}
Let $\frac1p<k<1$. Fix $i=-1,-2,\ldots$ and  denote $D_i := \{
(s,t) \in (0,1) \times (0,1) \, | \, 4^{i} \leq t-s < 4^{i+1 }
\}$. Consider the following operators $T_1,T_2,T_3$ which map
function defined on $(0,1)$ to a function defined on $D_i$:
\begin{align*}
\hspace{3cm}&(T_1 f)(s,t) : = \int_s^t (t-r)^{k-1} f(r) dr,\\
&(T_2 f)(s,t) :
= \int_{(s-4^i)\vee 0}^s (s-r)^{k-1} f(r) dr, \hspace{1cm} (s,t)\in D_i\\
&(T_3 f)(s,t) : = \int_0^{(s-4^i)\vee 0} (s-r)^{k-3} f(r) dr
\end{align*}

; note that $T_2f$ and $T_3$, in fact, are independent of $t$.
Then for $1\leq q < \infty$ we have
\begin{align}
\| T_m f\|_{L^q (D_i)} \leq c_m\, 4^{i(k+\frac1q)} \| f \|_{L^q
(0,1)},\; m=1,2\quad;\quad\| T_3 f\|_{L^q (D_i)} \leq c_3\,
4^{i(k-2+\frac1q)} \| f \|_{L^q (0,1)},\label{20110519-2}
\end{align}
where $c_1,c_2,c_3$ are absolute constants.
\end{lemm}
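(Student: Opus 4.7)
\textbf{The plan} is to bound each $T_m$ by a Schur-type estimate: apply H\"older in the $r$-variable with the positive kernel as the weight, then use Fubini to reduce the $(s,t)$-integral against a fixed $r$ to an explicit power of $4^i$. The assumption $\frac1p<k<1$ ensures the $r$-kernels are locally integrable, and on $D_i$ every relevant time increment sits in $[4^i,4^{i+1})$, which is the sole source of the $4^i$-powers.

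For $T_1$, since $(t-r)^{k-1}$ is integrable on $[s,t]$, H\"older gives
\[
|T_1 f(s,t)|^q \le \Bigl(\int_s^t (t-r)^{k-1}dr\Bigr)^{q-1}\int_s^t (t-r)^{k-1}|f(r)|^q dr
= \frac{(t-s)^{k(q-1)}}{k^{q-1}}\int_s^t (t-r)^{k-1}|f(r)|^q dr.
\]
On $D_i$, $(t-s)^{k(q-1)}\lesssim 4^{ik(q-1)}$. Integrating over $D_i$ and swapping order with the $r$-integral, the resulting $r$-kernel $\int\!\!\int_{(s,t)\in D_i,\,s<r<t}(t-r)^{k-1}ds\,dt$ is $\lesssim 4^{i(k+1)}$ uniformly in $r\in(0,1)$ (integrate first in $t$ over an interval of length at most $3\cdot 4^i$ with integrand bounded by $4^{ik}$, then in $s$ over a region of length $\lesssim 4^{i+1}$). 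Combining the two factors yields $\|T_1 f\|_{L^q(D_i)}^q\lesssim 4^{i(kq+1)}\|f\|_{L^q(0,1)}^q$, i.e. the exponent $k+\tfrac{1}{q}$.

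For $T_2$ and $T_3$, both are independent of $t$, so $\|T_m f\|_{L^q(D_i)}^q = |\{t:(s,t)\in D_i\}|\cdot\|T_m f(\cdot)\|_{L^q(0,1)}^q$ contributes an extra factor $3\cdot 4^i$; it then suffices to estimate each as a 1D operator in $s$. The same H\"older/Fubini scheme applies, this time with the one-sided kernel bounds
\[
\int_{(s-4^i)\vee 0}^s (s-r)^{k-1}dr\lesssim 4^{ik},\qquad
\int_0^{(s-4^i)\vee 0}(s-r)^{k-3}dr\lesssim 4^{i(k-2)},
\]
where the second uses $k-3<-1$ together with $k<2$, so that $\int_{4^i}^\infty u^{k-3}du=(2-k)^{-1}4^{i(k-2)}$ is finite. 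Matching $s$-integrals $\int_r^{r+4^i}(s-r)^{k-1}ds\lesssim 4^{ik}$ and $\int_{r+4^i}^1(s-r)^{k-3}ds\lesssim 4^{i(k-2)}$ after Fubini give $\|T_2 f\|_{L^q(0,1)}\lesssim 4^{ik}\|f\|_{L^q}$ and $\|T_3 f\|_{L^q(0,1)}\lesssim 4^{i(k-2)}\|f\|_{L^q}$; combined with the $t$-factor $4^{i/q}$, this produces the claimed exponents $k+\tfrac1q$ and $k-2+\tfrac1q$.

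The argument is essentially a routine Schur/Young test; \textbf{the only thing to watch} is precise bookkeeping of the powers of $4^i$, and verifying that the truncations $(s-4^i)\vee 0$ only \emph{shrink} the kernel integrals compared to their whole-line extensions, so that the uniform bounds above are not affected.
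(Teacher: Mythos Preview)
Your argument is correct. The Schur--H\"older scheme you use (split the kernel as weight, apply H\"older in $r$, then Fubini) is the standard direct proof of the bound that the paper obtains instead by real interpolation: the paper computes the $L^1(D_i)$ and $L^\infty(D_i)$ norms of each $T_m$ explicitly and then invokes $(L^1,L^\infty)_{1/q,q}=L^q$. Both routes hinge on the same two elementary integrals---$\int_s^t(t-r)^{k-1}dr\lesssim 4^{ik}$ and $\int_{4^i}^\infty u^{k-3}du\lesssim 4^{i(k-2)}$---and on the fact that the $t$- (or $s$-) section of $D_i$ has length $\lesssim 4^i$; your approach simply packages these directly for each $q$, while the paper isolates the endpoint cases and interpolates. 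Your method is slightly more self-contained (no interpolation theorem needed), whereas the paper's gives the endpoint constants with no H\"older loss.

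Two small wording points, neither affecting correctness: in your $T_1$ paragraph the phrase ``integrand bounded by $4^{ik}$'' should read ``$t$-integral bounded by $4^{ik}$'' (the integrand $(t-r)^{k-1}$ itself blows up); and the equality $\|T_m f\|_{L^q(D_i)}^q = |\{t:(s,t)\in D_i\}|\cdot\|T_m f\|_{L^q(0,1)}^q$ should be an inequality with the uniform bound $3\cdot 4^i$, since the $t$-slice length depends on $s$ near the endpoints of $(0,1)$.
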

\begin{proof}
1. For $q =1$ Fubini's theorem gives us
\begin{align*}
\| T_1 f\|_{L^1 (D_i)}&  = \int_{4^i}^1 \int_{
(t-4^{i+1})\vee0}^{t- 4^{i}}
\int_s^t(t-r)^{k-1} |f(r)| drdsdt\\
 & \leq  \int_0^1  |f(r)|  \left[\int_r^{r + 4^{i+1}}  (t-r)^{k-1} \left(\int_{t - 4^{i+1}}^r  ds\right)dt \right]dr\\
& \leq \frac{4^{k+1}}{k(k+1)}\cdot 4^{i(k+1)} \| f\|_{L^1(0,1)}.
\end{align*}
For $ q= \infty$ we have
\begin{align*}
\sup_{(s,t) \in D_i} |(T_1f)(s,t)| &\leq \| f\|_{L^\infty(0,1)}
\cdot\sup_{(s,t) \in D_i} \int_s^t (t-r)^{k-1} dr   \leq
\frac{4^k}{k}\cdot 4^{ik} \| f\|_{L^\infty(0,1)}.
\end{align*}
Then, by the real interpolation theorem
$(L_1,L_{\infty})_{\theta,q}=L_q$ with the relation
$\frac1q=\frac{\theta}{1}+\frac{1-\theta}{\infty}=\theta$, we get
\begin{align*}
\| T_1 f \|_{L^q(D_i)} & \leq c\, (4^{i(k+1)})^{\theta}( 4^{ i k
})^{1-\theta}\,\| f\|_{L^q(0,1)}  \leq c\, 4^{i (k + \frac1q)
}\,\| f\|_{L^q(0,1)}.
\end{align*}
hence, (\ref{20110519-2}) for $T_1$ holds .

2. For $q =1$ we have
\begin{align*}
\| T_2 f\|_{L^1 (D_i)}&  = \int_{4^i}^1 \int_{
(t-4^{i+1})\vee0}^{t- 4^{i}}
\int_{(s-4^i)\vee 0}^s(s-r)^{k-1} |f(r)| drdsdt\\
 & \leq  \int_0^1  |f(r)|  \left[\int_r^{r + 4^{i}}  (s-r)^{k-1} \left(\int_{s+4^i}^{s+4^{i+1}}  dt\right)ds \right]dr\\
& \leq \frac{3}{k}\cdot 4^{i(1 + k)} \| f\|_{L^1(0,1)}
\end{align*}
and for $ q= \infty$
\begin{align*}
\sup_{(s,t) \in D_i} |(T_2 f)(s,t)| \leq \| f\|_{L^\infty(0,1)}
\cdot\sup_{(s,t) \in D_i} \int_{(s-4^i)\vee0}^s (s-r)^{k-1} dr
\leq \frac{1}{k}\cdot 4^{ki}\; \| f\|_{L^\infty(0,1)}.
\end{align*}
By the real interpolation theorem, (\ref{20110519-2}) for $T_2$
holds.

3. For $T_3$ the proof is similar. Observe that
\begin{align*}
\| T_3 f\|_{L^1 (D_i)}&  = \int_{4^i}^1 \int_{
(t-4^{i+1})\vee0}^{t- 4^{i}}
\int_{(s-4^i)\vee 0}^s(s-r)^{k-3} |f(r)| drdsdt\\
 & \leq  \int_0^1  |f(r)|  \left[\int_{r + 4^{i}}^1  (s-r)^{k-3} \left(\int_{s+4^i}^{s+4^{i+1}}  dt\right)ds \right]dr\\
& \leq \frac{3}{2-k}\cdot4^{i(k-1)} \| f\|_{L^1(0,1)}.
\end{align*}
and
\begin{align*}
\sup_{(s,t) \in D_i} |(T_3f)(s,t)| \leq \| f\|_{L^\infty(0,1)}
\cdot\sup_{(s,t) \in D_i} \int_0^{(s-4^i)\vee0} (s-r)^{k-3} dr
\leq \frac{1}{2-k}\cdot 4^{i(k-2)}\; \| f\|_{L^\infty(0,1)}.
\end{align*}
By the real interpolation theorem, (\ref{20110519-2}) for $T_3$
holds.
\end{proof}

\hspace{-0.5cm}{\bf Proof of Lemma \ref{20110324-3}} \quad  1.
\;Fix $i=-1,-2,\ldots$. Since
\begin{align} \label{half}
 &E \int_{{\Rb}} \int \int_{ 4^i<| t-s| <4^{i+1} }\frac{|v_3 (t,x)-
v_3 (s,x)|^p}{|t-s|^{1 + \frac{p}2  k}}
   dsdtdx \nonumber\\
  & \hspace{10mm}\leq  2  E \int_{{\Rb}} \int_0^1 \int^{t- 4^i}_{t-4^{i+1}}  \frac{|v_3 (t,x)- v_3 (s,x)|^p}{(t-s)^{1 + \frac{p}2  k}}
   dsdtdx,
\end{align}
we assume $t>s$. Note that
\begin{eqnarray}\label{difference}
\begin{array}{ll}
v_3(t,x) - v_3(s,x) &= \int_s^t \int_{\Rb} \Ga(t-r,x-y) \tilde
g(r,y) dy\,d w_r\vspace{0.4cm}\\ & \quad + \int_0^s \int_{\Rb}
(\Ga(t-r,x-y) - \Ga(s-r, x-y)) \tilde g(r,y) dy\,dw_r.
\end{array}
\end{eqnarray}
The right-hand side of \eqref{half} is bounded by the sum of the
following quantities (up to a constant multiple):
\begin{align*}
I_1 & = E \int_{\Rb}\int_0^1 \int^{(t- 4^i)\vee
0}_{(t-4^{i+1})\vee 0} \frac{|\int_s^t
 \int_{\Rb} \Ga(t-r,x-y)\tilde g(r,y) dyd w_r|^p}{(t-s)^{1 + \frac{p}2 k}}\;dsdtdx
 ,\\
I_2 & = E \int_{\Rb}\int_0^1 \int^{(t- 4^i)\vee
0}_{(t-4^{i+1})\vee 0} \frac{|\int_0^s \int_{\Rb} (\Ga(t-r,x-y) -
\Ga(s-r, x-y)) \tilde g(r,y) dydw_r|^p}{(t-s)^{1 + \frac{p}2
k}}\;dsdtdx.
\end{align*}
 2. Recall that we assume $\frac1p <
k<1$ and $p\ge 2$.

\noindent\underline{ Estimation of  $I_1$}\quad By
Burkholder-Davis-Gundy inequality(BDG) (see Section 2.7 in
\cite{K1}) $I_1$ is dominated  by, up to a constant multiple,
\begin{align}\label{main2}
&E \int_{\Rb} \int_{4^i}^1 \int_{(t-4^{i +1})\vee 0 }^{t- 4^{i}}
\frac{(\int_s^t  |\int_{\Rb}
 \Ga(t-r, x-y) \tilde g(r,y)dy|^2 dr)^{\frac{p}2}}{(t-s)^{1 + \frac{p}2
 k}}\;dsdtdx.
\end{align}
Next, by Minkowski's inequality for integrals  and Lemma
\ref{multiplier1} (1), the expression \eqref{main2} is bounded by,
up to a constant multiple,
\begin{align*}
&  E \int_{4^i}^1 \int_{(t-4^{i +1})\vee 0 }^{t- 4^{i}} \frac{
(\int_s^t ( \int_{\Rb} |\int_{\Rb}
 \Ga(t-r, x-y) \tilde g(r,y)dy|^pdx )^\frac2p dr)^\frac{p}2 }{(t-s)^{1 + \frac{p}2k}}\,dsdt\\
  \lesssim & \quad E \int_{4^i}^1 \int_{(t-4^{i +1})\vee 0 }^{t- 4^{i}}
 \frac{(\int_s^t(t-r)^{k-1} \| \tilde g(r,\cdot)\|^2_{H^{k-1}_p (\Rb)}dr)^\frac{p}2  }
 {(t-s)^{1 + \frac{p}2 k}}\,dsdt\\
  \lesssim & \quad 4^{-i(  1 + \frac{p}2k  )} E \int_{4^i}^1 \int_{(t-4^{i +1})\vee 0 }^{t- 4^{i}}
 \left(\int_s^t(t-r)^{k-1} \| \tilde g(r,\cdot)\|^2_{H^{k-1}_p (\Rb)}dr\right)^\frac{p}2  \,dsdt.
 \end{align*}
Applying Lemma \ref{20110519-1} with the operator $T_1$ and
$\frac{p}{2}$ in place of $q$, we receive
\begin{eqnarray}
I_1 \lesssim c\, \|\tilde g\|^p_{L^p ( \Om \times (0,1), \mathcal
P, H^{k-1}_p (\Rb))}.\nonumber
\end{eqnarray}

\noindent\underline{ Estimation of $I_2$}\quad BDG inequality
$I_2$ is dominated by, up to a constant multiple,
\begin{align*}
&E \int_{\Rb} \int_{4^i}^1 \int_{(t-4^{i +1})\vee 0 }^{t- 4^{i}}
\frac{(\int_0^s |\int_{\Rb}
 (\Ga(t-r,x-y) - \Ga(s-r,x-y)) \tilde g(r,y)dy|^2 dr)^{\frac{p}2}}{(t-s)^{1 +
 \frac{p}2k}}\,dsdtdx\\ \vspace{2mm}
\lesssim & \quad E \int_{\Rb} \int_{4^i}^1 \int_{(t-4^{i +1})\vee
0 }^{t- 4^{i}} \frac{(\int^s_{(s - 4^{i})\vee 0} |\int_{\Rb}
 (\Ga(t-r,x-y) - \Ga(s-r,x-y))\tilde  g(r,y)dy|^2 dr)^{\frac{p}2}}{(t-s)^{1 + \frac{p}2k}}\,dsdtdx\\
&\; + E \int_{\Rb} \int_{4^{i}}^1 \int_{(t-4^{i +1})\vee 0 }^{t-
4^{i}} \frac{(\int_0^{(s - 4^{i})\vee 0}  |\int_{\Rb}
 (\Ga(t-r,x-y) - \Ga(s-r,x-y))\tilde g(r,y)dy|^2 dr)^{\frac{p}2}}{(t-s)^{1 +
 \frac{p}2k}}\,dsdtdx\\
 =&\quad I_{21}+I_{22}.
\end{align*}
By Minkowski's inequality for integrals and  Lemma
\ref{multiplier1} (1) the term $I_{21}$ is bounded by, up to a
constant multiple,
\begin{align*}
&E  \int_{4^i}^1 \int_{(t-4^{i+1})\vee 0 }^{t- 4^{i}}
\frac{(\int^s_{(s - 4^{i})\vee 0}\; ((t-r)^{k-1} + (s-r)^{k-1} )
\|\tilde g(r,\cdot)\|^{2}_{H^{k-1}_p ({\mathbb R}^n)}
dr)^\frac{p}2 }{(t-s)^{1 +
\frac{p}2k}}\;dsdt\\
 \lesssim &\quad 4^{-i(1 + \frac{p}2k)} E \int_{4^i}^1
\int_{(t-4^{i+1})\vee 0 }^{t- 4^{i}}
 \left(\int^s_{(s - 4^{i})\vee 0}   (s-r)^{k-1}   \|
 \tilde g(r,\cdot)\|^{2}_{H^{k-1}_p({\mathbb R}^n)} dr\right)^\frac{p}2
 dsdt
\end{align*}
; we used $k<1$.  Lemma \ref{20110519-1} with the operator $T_1$
gives us
\begin{eqnarray}
I_{21}\lesssim c\, \|\tilde g\|^p_{L^p ( \Om \times (0,1),
\mathcal P, H^{k-1}_p (\Rb))}.\nonumber
\end{eqnarray}
By  Minkowski's inequality for integrals again  and Lemma
\ref{multiplier1} (2) the term $I_{22}$ is dominated by, up to a
constant multiple,
\begin{align*}
& E   \int_{4^{i}}^1 \int_{(t-4^{i+1})\vee 0 }^{t- 4^{i}}
\frac{(\int_0^{(s-4^{i})\vee0} (\int_{\Rb} |\int_{\Rb}
 (\Ga(t-r,x-y) - \Ga(s-r,x-y))\tilde g(r,y)dy|^p dx)^\frac2p dr)^{\frac{p}2}}{(t-s)^{1 + \frac{p}2k}}\;dsdt\\
&  \lesssim  4^{-i(1 + \frac{p}2k)} E\int_{ 4^{i}}^1
\int_{(t-4^{i+1})\vee 0 }^{t- 4^{i}}
\left(\int_0^{(s-4^{i})\vee0}\;
         (t-s)^2 (s-r)^{k-3}  \|\tilde g(r,\cdot) \|^{2}_{H^{k-1}_p(\Rb)} dr\right)^{\frac{p}2}\;dsdt\\
         &  \lesssim  4^{-i(1 + \frac{p}2k)}\cdot 4^{ip}\cdot E\int_{ 4^{i}}^1
\int_{(t-4^{i+1})\vee 0 }^{t- 4^{i}}
\left(\int_0^{(s-4^{i})\vee0}\;
         (s-r)^{k-3}  \|\tilde g(r,\cdot) \|^{2}_{H^{k-1}_p(\Rb)}
         dr\right)^{\frac{p}2}\;dsdt.
\end{align*}
Then Lemma \ref{20110519-1} with the operator $T_3$ gives us
\begin{eqnarray}
I_{22}\lesssim c\, \|\tilde g\|^p_{L^p ( \Om \times (0,1),
\mathcal P, H^{k-1}_p (\Rb))}.\nonumber
\end{eqnarray}

3. By the estimations of $I_1,I_2$ our claim (\ref{main3})
follows. \hfill $\Box$

\mysection{Proof of Lemma \ref{maintheo} (2)}\label{20101216-2}
Again, we just assume $T=1$. We start with the following lemmas.
\begin{lemm}\label{Ga}
For $0 < t,\; r < \infty$
\begin{align*}
\int_{\Rb} | \Ga(t+ r,y) - \Ga(r,y)| dy \lesssim
\left\{\begin{array}{l}
 \frac{t}r, \quad t < r, \\ \\
 1, \quad t \geq r.
\end{array}
\right.
\end{align*}

\end{lemm}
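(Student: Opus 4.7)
My plan is to split the argument into the two regimes $t \geq r$ and $t < r$, which require very different tools.

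For the easy case $t \geq r$, I would use the fact that $\Gamma(s,\cdot)$ is a probability density for every $s>0$, so that $\int_{\mathbb{R}^n} \Gamma(s,y)\,dy = 1$. A direct application of the triangle inequality then gives
\[
\int_{\mathbb{R}^n} |\Gamma(t+r,y) - \Gamma(r,y)|\,dy \leq 2,
\]
which matches the claimed bound $1$ up to a harmless constant absorbed into $\lesssim$.

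For the delicate case $t < r$, my strategy is to write the difference as a time integral via the fundamental theorem of calculus,
\[
\Gamma(t+r,y) - \Gamma(r,y) = \int_0^t \partial_s \Gamma(r+s,y)\,ds,
\]
swap the order of integration, and then show the pointwise bound
\[
\int_{\mathbb{R}^n} |\partial_s \Gamma(s,y)|\,dy \lesssim \frac{1}{s}.
\]
A direct computation gives $\partial_s \Gamma(s,y) = s^{-1}\bigl(\tfrac{|y|^2}{4s} - \tfrac{n}{2}\bigr)\Gamma(s,y)$, and the scaling substitution $y = \sqrt{s}\,z$ reduces $\int_{\mathbb{R}^n} |\partial_s \Gamma(s,y)|\,dy$ to $s^{-1}$ times a finite absolute constant involving only $n$. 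Integrating this in $s$ from $0$ to $t$ and using $r+s \geq r$ yields
\[
\int_{\mathbb{R}^n} |\Gamma(t+r,y) - \Gamma(r,y)|\,dy \lesssim \int_0^t \frac{ds}{r+s} \leq \frac{t}{r},
\]
which is what we want.

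I do not expect a serious obstacle here: the heat semigroup's $L^1$-contractivity handles the large-$t$ regime, and explicit scaling of the Gaussian kernel handles the small-$t$ regime. The only small bookkeeping point is justifying the interchange of the $s$-integral with the $y$-integral, which is immediate since $\partial_s \Gamma(r+s,y)$ is smooth and integrable in $y$ uniformly for $s \in [0,t]$ with $r > 0$ fixed.
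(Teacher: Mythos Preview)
Your argument is correct. The paper actually omits the proof of this lemma entirely, remarking only that it ``is almost obvious''; your two-regime argument (triangle inequality on probability densities for $t\ge r$, fundamental theorem of calculus plus the scaling identity $\int_{\Rb}|\partial_s\Gamma(s,y)|\,dy = C_n/s$ for $t<r$) is exactly the standard computation one would supply, so there is nothing further to compare.
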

; this is almost obvious and the proof is omitted.

\begin{lemm}\label{lemma-complex}
Let $0\le  \te < 1, \,\, 1 < p< \infty$. Then for $g \in
H^\te_{p,o}(D)$,
\begin{align*}
\int_D \de(y)^{-p\te} |g(y)|^p dy \leq c \| g\|_{H^\te_{p,o}(D)}^p,
\end{align*}
where $\de(y) = dist(y, \pa D)$. The constant $c$ depends only on
$p, \, n$.
\end{lemm}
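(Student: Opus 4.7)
The plan is to establish this Hardy-type inequality by interpolating between the two endpoint cases $\theta = 0$ and $\theta = 1$. At $\theta = 0$ the statement reduces to the trivial bound $\|g\|_{L^p(D)}^p \leq \|g\|_{L^p(D)}^p$. At $\theta = 1$ the statement is the classical Hardy inequality for Sobolev functions vanishing on the boundary, namely
\[
\int_D \delta(y)^{-p}\,|g(y)|^p\,dy \;\leq\; c\,\|g\|_{H^1_{p,o}(D)}^p, \qquad g \in H^1_{p,o}(D) = W^{1,p}_0(D),
\]
which is standard on Lipschitz domains (the Lipschitz assumption on $D$ is exactly what is available to us). These two inequalities should be viewed as the statement that the linear map $M g(y) := \delta(y)^{-1} g(y)$ is bounded from $H^1_{p,o}(D)$ into $L^p(D)$, while the identity map is bounded from $L^p(D)$ into $L^p(D)$.

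Next I would apply Stein's complex interpolation theorem to the analytic family of operators
\[
T_z g(y) \;:=\; \delta(y)^{-z}\,g(y), \qquad z \in S := \{z \in \mathbb{C}: 0 \leq \operatorname{Re}(z) \leq 1\}.
\]
On the vertical line $\operatorname{Re}(z) = 0$ one has $|\delta(y)^{-z}| = 1$, so $T_z$ is bounded on $L^p(D)$ with norm at most $1$ uniformly in the imaginary part. On $\operatorname{Re}(z) = 1$ one writes $|\delta(y)^{-z}| = \delta(y)^{-1}$ and the endpoint Hardy inequality gives $\|T_z g\|_{L^p(D)} \leq c\,\|g\|_{H^1_{p,o}(D)}$, again uniformly. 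Since the $z$-dependence of $T_z g$ is analytic for fixed $g$ in a dense subclass (e.g.\ $C^\infty_c(D)$, on which $\delta$ is bounded away from zero), Stein's theorem yields boundedness of $T_\theta$ from the complex interpolation space $[L^p(D), H^1_{p,o}(D)]_\theta$ into $L^p(D)$, which is precisely the claimed inequality once we identify the intermediate space with $H^\theta_{p,o}(D)$.

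The only genuinely nontrivial step is this last identification: for a bounded Lipschitz domain $D$ one needs
\[
\bigl[L^p(D),\, H^1_{p,o}(D)\bigr]_\theta \;=\; H^\theta_{p,o}(D), \qquad 0 < \theta < 1,
\]
where the subscript $o$ on the right is understood in the Bessel-potential sense (closure of $C^\infty_c(D)$ in $H^\theta_p(D)$). This is the main technical obstacle; it rests on the fact that Stein's extension operator and its left inverse (restriction) respect both the scale of Bessel potential spaces and the $C^\infty_c$-closure structure on a Lipschitz domain, so complex interpolation on $\mathbb{R}^n$ can be transferred to $D$. With this identification in hand, taking $p$-th powers and absorbing constants yields the stated estimate
\[
\int_D \delta(y)^{-p\theta}\,|g(y)|^p\,dy \;\leq\; c\,\|g\|_{H^\theta_{p,o}(D)}^p,
\]
with $c$ depending only on $p$, $n$ and the Lipschitz character of $\partial D$.
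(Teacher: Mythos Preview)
Your proposal is correct and follows essentially the same route as the paper: the trivial bound at $\theta=0$, the classical Hardy inequality at $\theta=1$, and complex interpolation together with the identification $[L^p(D),H^1_{p,o}(D)]_\theta=H^\theta_{p,o}(D)$, which the paper cites from Jerison--Kenig. The only cosmetic difference is that the paper interpolates the identity map into the weighted target spaces $L^p(\delta^{-p\nu}\,dy)$, $\nu=0,1$, using the Stein--Weiss formula $(L^p(d\mu_0),L^p(d\mu_1))_{[\theta]}=L^p(d\mu_\theta)$, whereas you package the same argument via Stein's analytic-family theorem applied to $T_z g=\delta^{-z}g$.
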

\begin{proof}
We may assume $0<\theta<1$. We use complex interpolation of
$L^p$-spaces of measures. Let $d\mu_0(y) = dy$ and   $d\mu_1(y) =
\de^{-p}(y) dy$. The complex interpolation space between $L^p (d
\mu_0)$ and $L^p(d\mu_1)$ with index $\te$ is
\begin{eqnarray}
(L^p(d\mu_0), L^p(d\mu_1))_{[\te]} = L^p(d\mu_\te),\quad d\mu_\te
(y) := \de^{-p\te} dy\nonumber
\end{eqnarray}
(see Theorem 5.5.3 in \cite{BL}). Note that using Hardy's
inequality, we obtain  that for  $g \in H^1_{p,o}(D)$
\begin{align*}
\left(\int_D \de(y)^{-p} |g(y)|^p dy\right)^{\frac1p} \leq c
\left(\int_D |\na g(y)|^p dy\right)^{\frac1p} = c\; \|
g\|_{H^1_{p,o}(D)}.
\end{align*}
Since $( H^1_{p,o} (D), L^p(D))_{[\te]}= H^\te_{p,o}(D)$ (see Proposition 2.1 in  \cite{JK}), we get
\begin{align*}
\left(\int_D \de^{-p\te} (y) |g(y)|^p dy\right)^{\frac1p} \leq c\;
\| g\|_{( H^1_{p,o} (D), L^p(D))_{[\te]}} = c\; \|
g\|_{H^\te_{p,o}(D)}.
\end{align*}

\end{proof}

{\bf Proof of Lemma \ref{maintheo} (2)}\quad  1. Recall $1\le k
<1+\frac1p$ and $p\ge 2$.  For $g \in {\mathbb
H}^{k-1}_p(D_T)=L^p(\Om\times(0,1),{\mathcal P},
{H}^{k-1}_{p,o}(D))$, we denote $\tilde g \in
L^p(\Om\times(0,1),{\mathcal P}, {H}^{k-1}_p(\Rb))$ by $\tilde
g(\om,t,x) = g(\om,t,x)$ for $x \in D$ and $\tilde g(\om,t,x) =0$
for $x \in \Rb\setminus \bar D$. Then  by lemma \ref{prop3}, we
have $v_3 \in L^p(\Om\times (0,1), \mathcal P, H^{k}_p (\Rb))$,
where
\begin{eqnarray}
v_3(t,x)=\int_0^t \int_{\Rb} \Ga(t-s, x-y) \tilde g(s,y)
dy\;dw_s.\nonumber
\end{eqnarray}
By the usual trace theorem (see \cite{JW}), we get $v_3|_{\pa D_T}
\in L^p(\Om \times (0,1), {\mathcal P}, {B}^{k -\frac1p }_p (\pa
D))$. Hence, it is sufficient   to show that
\begin{align}\label{Main2}
E \int_{\pa D} \int \int_{0<s<t<1} \frac{|v_3(x,t) -
v_3(x,s)|^p}{(t-s)^{1 + \frac{p}2(k -\frac1p)}}dsdt\;d\si(x)
\lesssim \| g\|_{L^p ((0,1), \mathcal P, H^{k-1}_p (D))}.
\end{align}
Then, using real interpolation (see lemma  \ref{interpolationlemma}),
we complete the proof of lemma \ref{maintheo} (2).

2. The left-hand side of \eqref{Main2} is bounded by the sum of
the following quantities (up to a constant multiple):
\begin{align*}
J_1 & = E \int_{\pa D} \int_0^1 \int_0^t
\frac{|\int_s^t \int_{D} \Ga(t-r,x-y) \tilde g(r,y) dyd w_r|^p}{(t-s)^{1 + \frac{p}2(k -\frac1p)}}dsdtd\sigma(x) ,\\
J_2 & = E \int_{\pa D} \int_0^1 \int_0^t   \frac{|\int_0^s
\int_{D} (\Ga(t-r,x-y) - \Ga(s-r, x-y)) \tilde g(r,y)
dydw_r|^p}{(t-s)^{1 + \frac{p}2(k -\frac1p)}}dsdtd\si(x).
\end{align*}

\noindent\underline{Estimation  of $J_1$}\quad By BDG's
inequality, $J_1$ is dominated by, up to a constant multiple,
\begin{align}\label{Main1}
E \int_{\pa D} \int_0^1 \int_0^t \frac{(\int_s^t  |\int_{D}
 \Ga(t-r, x-y) \tilde g(r,y)dy|^2 dr)^{\frac{p}2}}{(t-s)^{1 + \frac{p}2(k
 -\frac1p)}}dsdtd\si(x).
\end{align}
Note
\begin{align}\label{J-1}
\begin{array}{ll}
\vspace{2mm} &\int_{\pa D} \Big(\int_s^t  |\int_{D}
 \Ga(t-r, x-y)\tilde  g(r,y)dy|^2 dr \Big)^{\frac{p}2}d\si(x)\\ \vspace{2mm}
 \lesssim &\Big(\int_s^t \Big(\int_{\pa D}  |\int_{D}
 \Ga(t-r, x-y)  \tilde g(r,y)dy|^p d\si(x) \Big)^{\frac{2}p}dr \Big)^{\frac{p}2}
 \\ \vspace{2mm}
 \lesssim &\Big(\int_s^t \left(\int_{\pa D} (\int_{\mathbb{R}^n}
 \Ga(t-r, x-y)dy)^{\frac{p}{p'}}  \cdot \int_{D}
 \Ga(t-r, x-y) |\tilde g(r,y)|^p dy\; d\si(x) \right)^{\frac{2}p}dr \Big)^{\frac{p}2}\\
  \lesssim &\Big(\int_s^t  \Big( \int_{D} |\tilde g(r,y)|^p \;\int_{\pa D}
 \Ga(t-r, x-y)d\si(x) \;dy \Big)^{\frac{2}p}dr
 \Big)^{\frac{p}2},\vspace{2mm}
 \end{array}
\end{align}
where $\frac1p + \frac1{p'} =1$. Note that for $y \in D$ there  is
a $x_y \in \pa D$ such that $\de(y) = |y -x_y|$, where $\de(y) =
dist(y, \pa D)$. Since $D$ is a bounded Lipschitz domain, there is
$r_0> 0$ independent of $x_y$ such that
 $|y -x| \approx \de(y) + |x -x_y|$ for all $ | x - x_y| < r_0$. We have
 \vspace{2mm}
\begin{align}\label{integral}
\begin{array}{ll}
\vspace{2mm} &\int_{\pa D}  \Ga(t-r, x-y)d\si(x) \\ \vspace{2mm}
\lesssim & \int_{|x - x_y| < r_0} \left[(t-r)^{-\frac{n}2}\cdot
e^{-c\frac{ \de(y)^2 +|x-x_y|^2}{t-r}}\right]d\si(x) + \int_{|x-
x_y| \geq r_0} \left[(t-r)^{-\frac{n}2} \cdot e^{-c\frac{ \de(y)^2
+|x-x_y|^2}{t-r}}\right]d\si(x)
\\ \vspace{2mm}
 \lesssim &\int_{|x'| < r_0, \, x' \in {\mathbb R}^{n-1} }
\left[(t-r)^{-\frac{n}2} \cdot e^{-c\frac{ \de(y)^2
+|x'|^2}{t-r}}\right]d x' +  (t-r)^{-\frac{n}2}\cdot  e^{-c\frac{
\de(y)^2 +r_0^2}{t-r}} \\ \vspace{2mm}
  \lesssim & (t-r)^{-\frac12} \cdot e^{- c\frac{\de(y)^2}{t-r}}
  \left[\int_{\mathbb{R}^{n-1}}e^{-c|y'|^2}dy'+(t-r)^{\frac{n-1}2}\cdot
  e^{-c\frac{r_0^2}{t-r}}\right]\\
 \lesssim & (t-r)^{-\frac12} \cdot e^{- c\frac{\de(y)^2}{t-r}}.
 \end{array}
\end{align} By
\eqref{integral} and the H\"older inequality, the last term in
\eqref{J-1} is bounded by, up to a constant multiple,
\begin{align} \label{J-2} \vspace{2mm}
  \Big(\int_s^t  \Big( \int_{D} |\tilde g(r,y)|^p (t-r)^{-\frac12}
e^{-c \frac{\de(y)^2}{t-r}} dy \Big)^{\frac{2}p}dr
\Big)^{\frac{p}2}
 & \lesssim (t-s)^{\frac{p-2}2} \int_s^t   \int_{D} |\tilde g(r,y)|^p
(t-r)^{-\frac12} e^{-c\frac{\de(y)^2}{t-r}} dy dr.
\end{align}
Hence, via Fubini's Theorem,  \eqref{Main1} is dominated by, up to
a constant multiple,
\begin{align*}
&E \int_0^1  \int_{D} |\tilde g(r,y)|^p  \left[\int_r^1 \int_0^r
(t-s)^{-\frac{p}2 (k-1) -\frac32} \frac{1}{(t-r)^\frac12}
e^{-c\frac{\de(y)^2}{t-r}}dsdt\right]dydr\\
 & \lesssim  E \int_0^1  \int_{D} |\tilde g(r,y)|^p  \left[\int_r^1
e^{-c\frac{\de(y)^2}{t-r}}\cdot
(t-r)^{-\frac{p}2 (k-1) -1 }  dt\right]dydr\\
 & = E \int_0^1  \int_{D} |\tilde g(r,y)|^p \left[\int_0^{1-r}
e^{-c\frac{\de(y)^2}{t}} \cdot t^{-\frac{p}2 (k-1) -1} dt \right]dydr \\
&= E \int_0^1 \int_{D} \de^{-p(k-1)}(y) |\tilde g(r,y)|^p
\left[\int^\infty_{\frac{\de^2(y)}{1-r}} e^{- ct}\cdot
t^{\frac{p}2 (k-1)
-1}dt\right]dydr\\
&\lesssim E \int_0^1 \int_{D} \de^{-p(k-1)}(y)
 |\tilde g(r,y)|^p dydr\\
 & \lesssim E \int_0^1 \|g(\cdot, r)\|_{H_{p,o}^{k-1} (D)}^p dr
\end{align*}
; for the last inequality we used the assumption $g \in {\mathbb
H}^{k-1}_{p,o} (D)$ and Lemma \ref{lemma-complex} with
$\theta=k-1$.

\noindent \underline{ Estimation of  $J_2$}\quad By BDG's
inequality, $J_2$ is dominated by, up to a constant multiple,
\begin{align}\label{main4}
E \int_{\pa D} \int_0^1 \int_0^t \frac{(\int_0^s  |\int_{D}
 (\Ga(t-r, x-y) - \Ga(s-r,x-y)) \tilde g(r,y)dy|^2 dr)^{\frac{p}2}}{(t-s)^{1 + \frac{p}2(k
 -\frac1p)}}dsdtd\si(x).
\end{align}
Define $A:= A(t,s,r,x,y) = \Ga(t-r, x-y) - \Ga(s-r,x -y)$. If
$p>2$, using the H\"older inequality twice, we get
\begin{align}
  \Big(\int_0^s  \left|\int_{D}
 A\cdot \tilde g(r,y)dy\right|^2 dr\Big)^{\frac{p}2}
 & \leq  \Big(\int_0^s \left[\int_{D} |A|dy \right]^{\frac{2(p-1)}p}
 \left[\int_{D} |A| |\tilde g(r,y)|^pdy \right]^{\frac{2}p}
 dr\Big)^{\frac{p}2}\label{20110519-4}
  \\
 & \leq  \Big(\int_0^s \left[\int_{D}
 |A|dy \right]^{\frac{2(p-1)}{p-2}}dr\Big)^{\frac{p-2}{2}}
 \int_0^s \int_{D} |A| |\tilde g(r,y)|^p dydr.\nonumber
\end{align}
Next, by changing variable from $r$ to $s-r$ and Lemma \ref{Ga},
\begin{align*}
\int_0^s \Big(\int_{D}
 |A|dy \Big)^{\frac{2(p-1)}{p-2}}dr &  = \int_0^s \Big(\int_{D}
|\Ga(t-s + r, x -y) - \Ga(r, x -y)|dy \Big)^{\frac{2(p-1)}{p-2}} dr \\
& \lesssim \left\{\begin{array}{l}
      \int_0^s dr, \quad s < t-s \\
         \int_0^{t-s} dr + \int_{t-s}^s (\frac{t-s}{r} )^{\frac{2(p-1)}{p-2}}
         dr, \quad s \geq  t-s
 \end{array} \right.  \\
 & = \left\{\begin{array}{l}
      s, \quad s < t-s \\
         t-s + (t-s)^{\frac{2(p-1)}{p-2}} ( (t-s)^{-\frac{p}{p-2}} - s^{-\frac{p}{p-2}}) \quad s \geq t-s
 \end{array} \right.  \\
 &  \lesssim (t-s)
\end{align*}
and
\begin{align}
\Big(\int_0^s  \left|\int_{D}
 A\cdot \tilde g(r,y)dy\right|^2 dr\Big)^{\frac{p}2}\lesssim
 (t-s)^{\frac{p-2}{2}}\int_0^s \int_{D} |A| |\tilde g(r,y)|^p dydr.\label{20110519-3}
\end{align}
If $p=2$, (\ref{20110519-4}) with $p=2$ and Lemma \ref{Ga}
immediately yields (\ref{20110519-3}). Hence, \eqref{main4} is
dominated by, up to a constant multiple,
\begin{eqnarray}
&& E \int_{\pa D} \int_0^1 \int_0^t \left[\int_0^s \int_{D}
|A(t,s,r,x,y)|
|\tilde g(r,y)|^pdydr\right]  (t-s)^{-\frac{p}2 (k-1) -\frac32} dsdt\;d\si(x)\nonumber\\
 &\lesssim&  E \int_0^1 \int_{D} |\tilde g(r,y)|^p \left[\int_0^{1-r} \int_0^t
(t-s)^{-\frac{p}2 (k-1) -\frac32} \int_{\pa D}|\Ga(t,x -y) -
\Ga(s, x-y)| d\si(x)\;dsdt\right]dydr.\nonumber\\ \label{main}
\end{eqnarray}
We estimate the boundary ($\partial D$) integral part: Since $ s<
t$, we have
\begin{align*}
& \int_{\pa D}|\Ga(t,x -y) - \Ga(s,x-y)| d\si(x)\\
&  \leq
 (\frac{1}{s^{\frac12 n}} - \frac{1}{t^{\frac12n}})\int_{\pa D}
 e^{-\frac{|x -y|^2}{4s}} d\si(x) + t^{-\frac12 n} \int_{\pa D}  e^{-\frac{|x
 -y|^2}{4t}}- e^{-\frac{|x -y|^2}{4s}}d\si(x) \\
 & = K_1 + K_2.
\end{align*}
Applying \eqref{integral} again,
\begin{align*}
K_1 = \frac{t^{\frac12 n} - s^{\frac12 n}}{t^{\frac12 n}
s^{\frac12 n}} \int_{\pa D} e^{-\frac{|x -y|^2}{4s}} d\si(x) &
\leq \frac{t^{\frac12 n} - s^{\frac12 n}}{t^{\frac12 n} }
s^{-\frac12} e^{-c \frac{\de(y)^2}{s}}\\
& \leq \left\{\begin{array}{l}
    s^{-\frac12} e^{- c\frac{\de(y)^2}{s}}  ,\quad   0< s < \frac12 t,\\
     t^{-\frac32}(t-s) e^{-c \frac{\de(y)^2}{t}}  , \quad \frac12 t \leq s< t.
\end{array}
\right.
\end{align*}
For $K_2$ we consider two cases. If $0 <s < \frac12 t$, using
\eqref{integral}, we get
\begin{align*}
K_2 \leq t^{-\frac12 n} \int_{\pa D} e^{-\frac{|x -y|^2}{4t}}
d\si(x) \leq  t^{ -\frac12} e^{- c\frac{\de(y)^2}{t}}.
\end{align*}
For $ \frac12 t < s < t $, using the Mean Value Theorem, there is
a $\eta$ satisfying $s < \eta < t$ such that
\begin{eqnarray}
K_2 = t^{-\frac12 n} \int_{\pa D} (t-s) \frac{|x-y|^2}{4 \eta^2}
e^{-\frac{|x -y|^2}{\eta}} d\si(x)\nonumber
\end{eqnarray}
and this leads to
\begin{align*}
 K_2 & \lesssim  t^{-\frac12 n} \int_{\pa D} (t-s) \frac{|x-y|^2}{t^2}
e^{-\frac{|x -y|^2}{4 t}} d\si(x)\\
& \lesssim t^{-\frac12 n -2}(t-s)  \int_{\pa D}(|x - x_y|^2 +\de(y)^2)  e^{-c\frac{|x-x_y|^2 +\de(y)^2}{t}}d\si(x)\\
 & \lesssim  t^{-\frac12 n -2}(t-s) e^{-c\frac{\de(y)^2}{t}}(t^{\frac{n+1}2 } + \de^2(y)t^{\frac{n-1}2})\\
& =   (t-s) e^{-c \frac{\de(y)^2}{t}}(t^{-\frac{3}{2}} + \de(y)^2
t^{-\frac{5 }2}).
\end{align*}
By these estimations, the bracket in (\ref{main}) is bounded by,
up to a constant multiple,
\begin{align*}
&\; \int_0^{1-r} t^{-\frac{p}2  (k-1) -\frac32}
 \left[\int_0^{\frac12 t} ( t^{ -\frac12} e^{-c\frac{\de(y)^2}{t}} + s^{
-\frac12} e^{-c\frac{\de(y)^2}{s}})ds\right]dt\\
+ & \int_0^{1-r} e^{-c\frac{\de(y)^2}{t}} (t^{-\frac{3}{2}} +
\de(y)^2t^{-\frac{5 }2} ) \left[\int_{\frac12 t}^t
(t-s)^{-\frac{p}2 (k-1) -\frac12}      ds\right]dt \\
 \lesssim & \int_0^{1-r} \left[ t^{-\frac{p}2  (k-1) -1}  e^{-c\frac{\de(y)^2}{t}}
+ t^{-\frac{p}2  (k-1) -\frac32}\cdot \de(y) \cdot
\int^\infty_{\frac{2\de(y)^2}{t}} s^{-\frac32} e^{-cs}ds   +
t^{-\frac{p}2 (k-1) -2} \cdot e^{-c\frac{\de(y)^2}{t}} \cdot
\de(y)^2 \right]dt\\
&=:L_1+L_2+L_3
\end{align*}
; for the inequality we used the assumption $k< 1+\frac1p$. It is
easy to see that the terms $L_1$ and $L_3$ are dominated by
$\de(y)^{-p(k-1)}$. This is also true for $L_2$; if $2\de(y)^2
\geq (1-r)$, then $2\de(y)^2 \geq t$ and
\begin{align*}
 L_2\lesssim \de(y)  \int_0^{1-r} t^{-\frac{p}2  (k-1) -\frac32} \int^\infty_{\frac{2\de(y)^2}{t}} s^{-\frac32} e^{-cs} dsdt
 &\lesssim \de(y)  \int_0^{1-r}   t^{-\frac{p}2  (k-1) -\frac32} e^{-c\frac{\de(y)^2}{t}} dt \lesssim \de(y)^{-p(k-1)}.
\end{align*}
If $2\de(y)^2 \leq 1-r$,
\begin{align*}
L_2 & \lesssim \de(y)  \int_0^{1-r} t^{-\frac{p}2  (k-1) -\frac32}
\int^\infty_{\frac{2\de(y)^2}{t}} s^{-\frac32} e^{-cs} dsdt\\
 &
\lesssim \de(y)  \int_0^{2\de(y)^2} t^{-\frac{p}2  (k-1) -\frac32}
e^{-c\frac{\de(y)^2}{t}}dt +
\int_{2\de(y)^2}^{1-r} t^{-\frac{p}2  (k-1) -1} dt\\
&\lesssim \de(y)^{-p(k-1)}.
\end{align*}
After all, (\ref{main}) (hence $J_2$) is bounded by, up to a
constant multiple,
\begin{align*}
 E \int_0^1 \int_{D}\de(y)^{-p(k-1)} |\tilde g(r,y)| dydr
 & \lesssim E \int_0^1 \|g(\cdot, r)\|_{H_p^{k-1} (D)}^p dr
\end{align*}
; we used the assumption $g \in {\mathbb H}^{k-1}_{p,o} (D)$ and
Lemma \ref{lemma-complex}.

3. The step 2 implies \eqref{Main2}. The lemma is proved.
\hspace{7cm}$\Box$

\end{document}